\providecommand{\BBb}[1]{{\mathbb{#1}}}
\providecommand{\cal}[1]{{\mathcal{#1}}}   
\newcommand{\Bcirc}{\overset{\lower 1.5pt%
              \hbox{$@,@,@,@,@,\scriptscriptstyle\circ$}}B{}}
\newcommand{\Binfty}{\overset{\lower 1.5pt%
              \hbox{$@,@,@,@,@,\scriptscriptstyle\infty$}}B{}}
\newcommand{\bigdot}{\mathbin{\raise.65\jot\hbox{$\scriptscriptstyle\bullet$}}}
\newcommand{\C}{{\BBb C}}
\newcommand{\dual}[2]{\langle\,#1,\,#2\,\rangle}
\newcommand{\erd}{\overset{\lower 1pt\hbox{\large.}}{e}
                  \overset{\lower 1pt\hbox{\large.}}{r}}
\newcommand{\Fcirc}{\overset{\lower 1.5pt%
               \hbox{$@,@,@,@,@,\scriptscriptstyle\circ$}}F{}}
\newcommand{\fracc}[2]{{
                \textstyle\frac{#1}{\raise 1pt\hbox{$\scriptstyle #2$}}}}
\newcommand{\fracnp}{\fracc np}
\newcommand{\fracci}[2]{{\frac{#1}{\raise 1pt\hbox{$\scriptscriptstyle #2$}}}}
\newcommand{\fracpi}{\fracci1p}
\newcommand{\im}{\operatorname{i}}
\newcommand{\loc}{\operatorname{loc}}
\newcommand{\nrm}[2]{\|#1\|_{#2}}
\newcommand{\Nrm}[2]{\bigl\|#1\bigr\|_{#2}}
\newcommand{\order}{\operatorname{order}}
\newcommand{\op}[1]{\operatorname{#1}}
\newcommand{\OP}{\operatorname{OP}}
\newcommand{\N}{\BBb N}
\renewcommand{\Re}{\operatorname{Re}}
\newcommand{\R}{{\BBb R}}
\newcommand{\Rn}{{\BBb R}^{n}}
\providecommand{\rom}[1]{\upn{#1}}
\newcounter{enmcount}\renewcommand{\theenmcount}{{\rm\arabic{enmcount}}}
\newcounter{rmcount}\renewcommand{\thermcount}{{\rm\roman{rmcount}}}
\newenvironment{rmlist}{%
\begin{list}{{\rm(\thermcount)}}{\setlength{\labelwidth}{\leftmargin}%
\usecounter{rmcount}}}{\end{list}}
\newcounter{Rmcount}\renewcommand{\theRmcount}{{\rm\Roman{Rmcount}}}
\newcommand{\Set}[2]{\bigl\{\,#1\bigm| #2\,\bigr\}}
\newcommand{\supp}{\operatorname{supp}}
\newcommand{\Z}{\BBb Z}
\renewcommand{\check}[1]{\overset{{\scriptscriptstyle \vee}}{#1}}
\renewcommand{\hat}[1]{\overset{{\scriptscriptstyle \wedge}}{#1}}
\numberwithin{equation}{section}
\newtheorem{thm}{Theorem}
\numberwithin{thm}{section}
\newtheorem{prop}[thm]{Proposition}
\newtheorem{lem}[thm]{Lemma}
\newtheorem{cor}[thm]{Corollary}
\theoremstyle{definition}
\newtheorem{defn}[thm]{Definition}
\newtheorem{exmp}[thm]{Example}
 \numberwithin{exercise}{section}
\theoremstyle{remark}
\newtheorem{rem}[thm]{Remark}
\title[$L_{{p}}$-Theory of Type $1,1$-Operators]{%
$\mathbf{L}_{\mathbf{p}}$-Theory of Type $\mathbf{1},\mathbf{1}$-Operators}
\author{Jon Johnsen}
\address{Department of Mathematical Sciences\\ Aalborg University\\ 
Fredrik Bajers Vej 7G\\ DK-9220 Aalborg {\O}st, Denmark}
\email{jjohnsen@math.aau.dk}
\subjclass[2000]{35S05,47G30}
\keywords{Exotic pseudo-differential operators, type $1,1$, twisted diagonal
condition, paradifferential decomposition, Spectral Support Rule,
factorisation inequality, corona conditions}
\dedicatory{Dedicated to Professor Hans Triebel on the Occasion of his Seventy Fifth Birthday}
\thanks{Supported by the Danish Council for Independent Research, Natural Sciences  
(Grant No. 09-065927)%
\\[5\jot]
{\tt Appeared in  Mathematische Nachrichten, {\bf 286} (2013), 712--729}} 
\begin{document}
\begin{abstract}
This is a continuation of recent work on the general definition of
pseudo-differential operators of type $1,1$, in H{\"o}rmander's sense.
Continuity in $L_p$-Sobolev spaces and
H{\"o}lder--Zygmund spaces, and more generally in Besov and
Lizorkin--Triebel spaces, is proved for positive smoothness;
with extension to arbitrary
smoothness for operators in the self-adjoint subclass. As a main tool the
paradifferential decomposition is used for type $1,1$-operators
in combination with the Spectral Support Rule for pseudo-differential
operators and pointwise estimates in terms of maximal functions of
Peetre--Fefferman--Stein type. 
\end{abstract}
\maketitle

\section{Introduction}  \label{intro-sect}
The understanding of pseudo-differential operators of type $1,1$
and their applications developed crucially in the 1980's through works of
Meyer \cite{Mey81}, Bony \cite{Bon}, 
Bourdaud \cite{Bou83,Bou88}, 
H{\"o}rmander \cite{H88,H89}; 
cf also the revised exposition in \cite[Ch.~9]{H97}.
Their theory was taken up again more recently by the author,
who showed 
that Lizorkin--Triebel spaces $F^s_{p,q}$ are optimal for certain borderlines
\cite{JJ04DCR,JJ05DTL}. 

However, the first general definition of type $1,1$-operators was
given in 2008 by the author in \cite{JJ08vfm} and used there in a discussion of
unclosability, pseudo-locality,
non-preservation of wavefront sets and the Spectral Support Rule.
The present paper continues the work in \cite{JJ08vfm} with a
systematic approach to their $L_p$-theory.

\bigskip

Recall that
by definition, the symbol $a(x,\eta)$ of a type $1,1$-operator of order
$d\in \R$ fulfils
\begin{equation}
  |D^\alpha_\eta D^\beta_x a(x,\eta)|\le
  C_{\alpha,\beta}(1+|\eta|)^{d-|\alpha|+|\beta|}\quad\text{for}\quad
  x,\eta\in \Rn.  
  \label{Cab-ineq}
\end{equation}
The corresponding operator is for \emph{Schwartz functions}, ie for $u\in \cal
S(\Rn)$, given by 
\begin{equation}
  a(x,D)u=(2\pi)^{-n}\int e^{-\im x\cdot \eta}a(x,\eta)\hat u(\eta)\,d\eta  .
\end{equation}
But a general definition for $u\in \cal S'\setminus \cal S$ must
take into account that in some cases they can
only be defined on proper subspaces $E\subset \cal S'$.

A rigorous definition of type $1,1$-operators was
first given in \cite{JJ08vfm}. 
Indeed, it was proposed to stipulate that $u$ belongs to the domain $D(a(x,D))$ and to set
\begin{equation}
  a(x,D)u:= \lim_{m\to\infty } (2\pi)^{-n}\int_{\Rn}
  e^{\im x\cdot\eta}\psi(2^{-m}D_x)a(x,\eta)\psi(2^{-m}\eta)\hat u(\eta)\,d\eta
  \label{a11-id}
\end{equation}
whenever this limit exists in $\cal D'(\Rn)$ for all the
$\psi\in C^\infty_0(\Rn)$ with $\psi=1$ in a neighbourhood of the origin and
does not depend on such $\psi$. 
(More precisely, one should replace the integral above by the action of
$\OP(\psi(2^{-m}D_x)a(x,\eta)\psi(2^{-m}\eta))$ in $\OP(S^{-\infty})$ on $u$.)

This unconventional definition, by 
\emph{vanishing frequency modulation}, was motivated by the applications of type
$1,1$-operators in the theory of semi-linear elliptic boundary
problems in the author's work \cite{JJ08par}.

In the present paper, the main question is to obtain boundedness
\begin{equation}
  \nrm{a(x,D)u}{s}\le c_s \nrm{u}{s+d},
\label{s-ineq}
\end{equation}
where the $s$-dependent norms can be those of the Sobolev spaces
$H^s_p$ (for a fixed $p\in\,]1,\infty[\,$), the
H{\"o}lder--Zygmund spaces $C^s_*$, or even of the
Besov spaces $B^s_{p,q}$ or Lizorkin--Triebel spaces $F^s_{p,q}$.
These $L_p$-results constitute an important justification of the
definition in \eqref{a11-id}.

The proofs are based on Littlewood--Paley theory,
where it has been most useful to adopt the
pointwise estimates in the recent article
\cite{JJ11pe}. Indeed, this gives the \emph{factorisation inequality}
\begin{equation}
  |a(x,D)u(x)|\le F_a(x)\cdot u^*(x)
\label{apest-eq}
\end{equation}
in terms of the Peetre--Fefferman--Stein maximal function 
$u^*(x)=\sup_{y\in\Rn}{|u(x-y)|}{(1+R|y|)^{-N}}$.
This was introduced in the theory of $F^s_{p,q}$ spaces in 1975 by
Peetre \cite{Pee75TL}, and soon adopted in the works of Triebel
\cite{T0,T2,T3} and others. The systematic use of $u^*$ for control of
pseudo-differential operators, cf.\ \eqref{apest-eq}, was seemingly
first proposed in \cite{JJ11pe}.

The symbol factor $F_a(x)$ in \eqref{apest-eq} is easily controlled in terms of
integrals reminiscent of the Mihlin--H{\"o}rmander multiplier theorem;
cf.\ Theorem~\ref{Fa-thm} below.
This is useful for type $1,1$-operators, because the integrals themselves
can be controlled for symbols in the self-adjoint subclass via their 
characterisation of H{\"o}rmander recalled in
Theorem~\ref{a*-thm} below.
In fact, in Section~\ref{Lp-sect} this has lead to estimates of such operators 
in spaces with $0<p\le 1$, which cannot be treated as duals of other spaces.

Notation is settled in Section~\ref{prel-sect} along with facts on
operators of type $1,1$. Section~\ref{pe-sect} 
briefly recalls some facts on \eqref{apest-eq} from \cite{JJ11pe}.
Littlewood--Paley analysis of type $1,1$-operators is treated systematically
in Section~\ref{corona-sect}. Estimates in spaces over $L_p$ are discussed in 
Section~\ref{Lp-sect}.

\section{Preliminaries on type $1,1$-operators}   \label{prel-sect}

Notation and notions from distribution theory, such as the spaces
$C^\infty_0$, $\cal S$, $C^\infty$ of smooth functions and
their duals $\cal D'$, $\cal S'$, $\cal E'$ of distributions,
and the Fourier transformation $\cal F$, will be as in H{\"o}rmander's
book \cite{H}, unless otherwise is mentioned.
Eg $\dual{u}{\varphi}$ denotes the value of a distribution $u$ on a test
function $\varphi$. 
The space $\cal O_M(\Rn)$ consists of the slowly increasing $f\in
C^\infty(\Rn)$, ie the $f$ that for each multiindex $\alpha$ and
some $N>0$ fulfils 
$|D^\alpha f(x)|\le c(1+|x|)^{N}$.

As usual $t_{+}=\max(0,t)$ is the positive part
and $[t]$ denotes the greatest integer $\le t$. In general, $c$ will denote
a real constant specific to the place of occurrence.

\subsection{The general definition of type $1,1$-operators}
The reader may consult \cite{JJ08vfm} for an overview of results on 
type $1,1$-operators and a systematic treatment. 
The present paper is partly a continuation of
\cite{JJ04DCR,JJ05DTL,JJ08vfm}, but it suffices to recall a few facts. 

The operators are defined, as usual, on the Schwartz space $\cal
S(\Rn)$ by
\begin{equation}
  a(x,D)u=\OP(a)u(x)
  =(2\pi)^{-n}\int e^{\im x\cdot \eta} a(x,\eta)\cal Fu(\eta)\,d\eta,
 \qquad u\in \cal S(\Rn).
  \label{axDu-id}
\end{equation}
Hereby the symbol $a(x,\eta)$ is required to be in 
$C^\infty(\Rn\times \Rn)$, of order $d\in \R$ and type $1,1$; ie for all
multiindices $\alpha$, $\beta\in \N_0^n$ it fulfils \eqref{Cab-ineq},
or more precisely has finite seminorms
\begin{equation}
 p_{\alpha,\beta}(a):= \sup_{x,\eta\in \Rn}
  (1+|\eta|)^{-(d-|\alpha|+|\beta|)}
  |D^\alpha_\eta D^\beta_x a(x,\eta)|
  <\infty.  
  \label{pab-eq}
\end{equation}
The Fr\'echet space of such symbols is denoted by $S^d_{1,1}(\Rn\times \Rn)$,
or just $S^d_{1,1}$.

For arbitrary $u\in \cal S'\setminus \cal S$ it is quite delicate
whether or not $a(x,D)u$ is defined.
To recall from \cite{JJ08vfm} how type $1,1$-operators 
can be defined in general, note that in terms of the partially Fourier
transformed  symbol 
\begin{equation}
  \hat a(\xi,\eta)=\cal F_{x\to\xi}(a(x,\eta)),
\end{equation}
one can define a modified symbol $\psi(2^{-m}D_x)a(x,\eta)=
\cal F^{-1}_{\xi\to x}(\psi(2^{-m}\xi)\hat a(\xi,\eta))$.

\begin{defn}   \label{a11-defn}
For a symbol $a(x,\eta)$ in $S^d_{1,1}(\Rn\times \Rn)$ and
cut-off functions $\psi\in C^\infty_0(\Rn)$ equal to
$1$ in a neighbourhood of the origin, let
\begin{equation}
  a_{\psi}(x,D)u:=
  \lim_{m\to\infty }\op{OP}(\psi(2^{-m}D_x)a(x,\eta)\psi(2^{-m}\eta))u.
  \label{aPsi-eq}
\end{equation}
If for each such $\psi$ the limit 
$a_{\psi}(x,D)u$ exists in $\cal D'(\Rn)$ and moreover is independent of $\psi$, then
$u$ belongs to the domain $D(a(x,D))$ by definition and 
\begin{equation}
  a(x,D)u=a_{\psi}(x,D)u.
  \label{aPsi'-eq}
\end{equation}
Thus $a(x,D)$ is a map $\cal S'(\Rn)\to\cal D'(\Rn)$ with dense domain.
\end{defn}

Obviously the action on $u$ is well defined for each $m$ in
\eqref{aPsi-eq} as the modified symbol is in $S^{-\infty }$.
Since the removal of high frequencies in $x$ and $\eta$, which is achieved from
$\psi(2^{-m}D_x)$ and $\psi(2^{-m}\eta)$, disappears for $m\to\infty $, this
was called definition by vanishing frequency modulation in \cite{JJ08vfm};
and accordingly $\psi$ is said to be a modulation function.

While the calculus of type $1,1$-operators is delicate in general,
cf \cite{H88,H89,H97}, the following result is straightforward from the
definition:

\begin{prop}
  \label{abc-prop}
When $a(x,\eta)$ is in $ S^{d_1}_{1,1}(\Rn\times \Rn)$ and
$b(\eta)$ belongs to $S^{d_2}_{1,0}(\Rn\times \Rn)$, 
then the symbol $c(x,\eta):=a(x,\eta)b(\eta)$
is in $S^{d_1+d_2}_{1,1}(\Rn\times \Rn)$ and
\begin{equation}
  c(x,D)u=a(x,D)b(D)u,
\end{equation}
where $D(c(x,D))=D(a(x,D)b(D))$;
that is, the two sides are simultaneously defined.
\end{prop}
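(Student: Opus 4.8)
The plan is to prove the two assertions in turn: the symbol membership $c\in S^{d_1+d_2}_{1,1}$ is a direct consequence of the Leibniz rule, while the operator identity rests on the observation that a Fourier multiplier $b(D)$ commutes with the frequency modulation built into Definition~\ref{a11-defn}.

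For the symbol estimate I would use that $b$ depends on $\eta$ only, so
\begin{equation}
  D^\alpha_\eta D^\beta_x\bigl(a(x,\eta)b(\eta)\bigr)
  =\sum_{\gamma\le\alpha}\binom{\alpha}{\gamma}
   \bigl(D^{\alpha-\gamma}_\eta D^\beta_x a(x,\eta)\bigr)\,D^{\gamma}_\eta b(\eta),
\end{equation}
and inserting the bounds \eqref{Cab-ineq} for $a$ together with $|D^\gamma_\eta b(\eta)|\le c(1+|\eta|)^{d_2-|\gamma|}$ yields at once $|D^\alpha_\eta D^\beta_x c(x,\eta)|\le C_{\alpha,\beta}(1+|\eta|)^{d_1+d_2-|\alpha|+|\beta|}$. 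In terms of the seminorms \eqref{pab-eq} this even bounds each $p_{\alpha,\beta}(c)$ by a finite linear combination of products of seminorms of $a$ and $b$, so $c\in S^{d_1+d_2}_{1,1}$ and $c(x,D)$ is a type $1,1$-operator of order $d_1+d_2$ in the sense of Definition~\ref{a11-defn}.

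For the operator identity, the key step is that for every modulation function $\psi$ and every $m$ one has, since $b$ is independent of $x$ and hence $\cal F_{x\to\xi}(c(x,\eta))=b(\eta)\,\cal F_{x\to\xi}(a(x,\eta))$,
\begin{equation}
  \psi(2^{-m}D_x)c(x,\eta)\,\psi(2^{-m}\eta)
  =\bigl(\psi(2^{-m}D_x)a(x,\eta)\bigr)\,b(\eta)\,\psi(2^{-m}\eta).
\end{equation}
I would combine this with the elementary composition rule $\OP(p)\,b(D)v=\OP\bigl(p(\cdot,\eta)b(\eta)\bigr)v$, valid for a symbol $p\in S^{-\infty}$ that is compactly supported in $\eta$ and any $v\in\cal S'(\Rn)$ (here $b\in\cal O_M(\Rn)$, so $b(D)\colon\cal S'\to\cal S'$ is well defined and $\cal F(b(D)v)=b\,\cal Fv$ makes the two oscillatory integrals coincide). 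Applying this with $v=u$ and $p=\psi(2^{-m}D_x)a(x,\eta)\psi(2^{-m}\eta)$ gives, for all $m$,
\begin{equation}
  \OP\bigl(\psi(2^{-m}D_x)c(x,\eta)\,\psi(2^{-m}\eta)\bigr)u
  =\OP\bigl(\psi(2^{-m}D_x)a(x,\eta)\,\psi(2^{-m}\eta)\bigr)\,b(D)u .
\end{equation}
Letting $m\to\infty$, the left-hand side converges in $\cal D'(\Rn)$ exactly when the right-hand side does, the limits agree, and $\psi$-independence holds on one side if and only if it holds on the other. Thus $u\in D(c(x,D))$ if and only if $b(D)u\in D(a(x,D))$, which is precisely $D(c(x,D))=D(a(x,D)b(D))$, and on this common domain $c(x,D)u=a(x,D)b(D)u$.

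I expect the only point requiring care to be the composition rule $\OP(p)\,b(D)=\OP(pb)$: one must check that $b(D)u$ is a legitimate element of $\cal S'$ — which follows from $b\in\cal O_M$ — and that $p(x,\eta)b(\eta)$ is again an admissible symbol, which is immediate because $p$ is compactly supported in $\eta$, so $pb\in S^{-\infty}$. All remaining steps are bookkeeping with Definition~\ref{a11-defn}.
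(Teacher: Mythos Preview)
Your proof is correct and follows essentially the same route as the paper's: verify $c\in S^{d_1+d_2}_{1,1}$ via the Leibniz rule, establish the identity $\OP(\psi(2^{-m}D_x)a\,\psi(2^{-m}\eta))b(D)u=\OP(\psi(2^{-m}D_x)c\,\psi(2^{-m}\eta))u$ for each $m$ and all $u\in\cal S'$, then pass to the limit to obtain $a_\psi(x,D)b(D)u=c_\psi(x,D)u$ with simultaneous convergence and $\psi$-independence. The only presentational difference is that the paper first writes the fixed-$m$ identity for $u\in\cal S$ from the integral formula \eqref{axDu-id} and then extends to $\cal S'$ using that the symbols lie in $S^{-\infty}$ and $S^{d_2}_{1,0}$, whereas you work directly on $\cal S'$ via $b\in\cal O_M$.
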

\begin{proof}
That $c(x,\eta)$ is in $S^{d_1+d_2}_{1,1}$ can be verified in the usual way
from symbolic estimates.
For an arbitrary modulation function $\psi$ it is obvious from
\eqref{axDu-id} that for every $u\in \cal S$,
\begin{equation}
  \OP(\psi(2^{-m}D_x)a(x,\eta)\psi(2^{-m}\eta))b(D)u=
  \OP(\psi(2^{-m}D_x)a(x,\eta)\psi(2^{-m}\eta)b(\eta))u.
\end{equation}
This extends to all $u\in \cal S'$ since the symbols are in $S^{-\infty }$
or $S^{d_2}_{1,0}$.
Moreover, for $m\to\infty $ the limit exists on both or none of the two
sides for each $u\in \cal S'$, so in the notation of
\eqref{aPsi-eq}, 
\begin{equation}
  a_\psi(x,D)(b(D)u)= c_{\psi}(x,D)u. 
\end{equation}
Now $u\in D(c(x,D))$ if and only if the right-hand side is independent of
$\psi$, ie if the left-hand side is so, which is equivalent to 
$b(D)u\in D(a(x,D))$, ie to $u\in D(a(x,D)b(D))$.
\end{proof}

\begin{exmp}
  \label{Ching-exmp}
A standard example of a symbol of type $1,1$ results by taking an auxiliary
function $A\in C^\infty_0(\Rn)$, say with $\supp A\subset\{\,\eta\mid
\tfrac{3}{4}\le |\eta|\le\tfrac{5}{4}\,\}$, and $\theta\in \Rn$ fixed:
\begin{equation}
  a_{\theta}(x,\eta)
=\sum_{j=0}^\infty 2^{jd}e^{-\im 2^j{x}\cdot{\theta}}
  A(2^{-j}\eta).
  \label{ching-eq}
\end{equation}
Clearly $a_{\theta}\in S^d_{1,1}$ since the terms are disjointly
supported.

Such symbols were used by Ching \cite{Chi72} and 
Bourdaud \cite{Bou88} for $d=0$, $|\theta|=1$ to show unboundedness on $L_2$.
Refining this, H{\"o}rmander~\cite{H88}
linked continuity from $H^s$ with $s>-r$
to the property that $\theta$ is a zero of $A$ of order $r\in\N_0$.
Extension to $d\in \R$ was given in \cite{JJ08vfm}.

Moreover, it was shown in \cite[Lem.~3.2]{JJ08vfm} that
$a_{\theta}(x,D)$ is unclosable in $\cal S'$ when $A$ is taken to have
support in a small neighbourhood of $\theta$.
Therefore Definition~\ref{a11-defn} cannot in general be replaced 
by a closure of the graph in eg $\cal S'\times \cal S'$.
\end{exmp}

As a general result, it was shown in \cite[Sec.~4]{JJ08vfm}
that the subspace $\cal S(\Rn)+\cal F^{-1}\cal E'(\Rn)$ always is contained in
the domain of $a(x,D)$ and that this is a map
\begin{equation}
  a(x,D)\colon \cal S(\Rn)+\cal F^{-1}\cal E'(\Rn) \to \cal O_M(\Rn).
  \label{aFE-eq}
\end{equation}
In fact, if
$u=v+v'$ is an arbitrary splitting of $u$ 
with $v\in \cal S$ and $v'\in \cal F^{-1}\cal E'$, it was shown that
\begin{equation}
  a(x,D)u= a(x,D)v+\OP(a(1\otimes \chi))v',
  \label{aFE-id}
\end{equation}
whereby $a(1\otimes \chi)(x,\eta)=a(x,\eta)\chi(\eta)$ and $\chi\in
C^\infty_0(\Rn)$ is chosen so that $\chi=1$ holds in a neighbourhood of 
$\supp\cal Fv'$, but otherwise arbitrarily.
Here $a(x,\eta)\chi(\eta)$ is in $S^{-\infty }=\bigcap S^d_{1,1}$.

In fact, $\cal O_M(\Rn)$ is invariant under $a(x,D)$, and 
$a(x,D)\colon C^\infty\bigcap\cal S'\to C^\infty$; cf \cite[Thm.~2.7]{JJ10tmp}.

\subsection{Conditions along the twisted diagonal}   \label{TDC-ssect}
As the first explicit condition on the symbol of a type $1,1$-operator,
H{\"o}rmander \cite{H88} proved that \eqref{s-ineq} holds for 
the norms of $H^s$ with arbitrary $s\in\R$, $u\in\cal S$,
whenever $a\in S^d_{1,1}$
fulfils the \emph{twisted diagonal condition}: for some $B\ge 1$ 
\begin{equation}
  \hat a(\xi,\eta)=0 \quad\text{where}\quad
    B(1+|\xi+\eta|)< |\eta|.
  \label{tdc-cnd}
\end{equation}
This means that the partially Fourier transformed symbol 
$\hat a(\xi,\eta)$ vanishes in a conical neighbourhood of a
non-compact part of the twisted diagonal 
\begin{equation}
  \cal T=\{\,(\xi,\eta)\in \Rn\times \Rn\mid \xi+\eta=0\,\}.  
\end{equation}

Localisations to conical neighbourhoods (of non-compact parts) of $\cal T$
was also introduced by H{\"o}rmander in \cite{H88,H89,H97},
by passing to $a_{\chi,\varepsilon}(x,\eta)$ defined by
\begin{equation}
  \hat a_{\chi,\varepsilon}(\xi,\eta)
  =\hat a(\xi,\eta)\chi(\xi+\eta,\varepsilon\eta),
  \label{axe-eq}
\end{equation}
whereby $\chi\in C^\infty (\Rn\times \Rn)$ is chosen so that
$\chi(t\xi,t\eta)= \chi(\xi,\eta)$ for $t\ge 1,\ |\eta|\ge 2$ and
\begin{gather}
  \supp \chi\subset \{\,(\xi,\eta)\mid 1\le |\eta|,\ |\xi|\le
|\eta|\,\}
\label{chi1-eq} \\
  \chi=1 \quad\text{in}\quad 
  \{\,(\xi,\eta)\mid 2\le |\eta|,\ 2|\xi|\le |\eta|\,\}.
  \label{chi3-eq}
\end{gather}
Using this, H{\"o}rmander analysed a
milder condition than the strict vanishing in \eqref{tdc-cnd},
namely that for some $\sigma\in\R$, 
it holds for all multiindices $\alpha$ and $0<\varepsilon<1$ that
\begin{equation}
  N_{\chi,\varepsilon,\alpha}(a):=
  \sup_{R>0,\; x\in \Rn}R^{-d}\big(
  \int_{R\le |\eta|\le 2R} |R^{|\alpha|}D^\alpha_{\eta}a_{\chi,\varepsilon}
  (x,\eta)|^2\,\frac{d\eta}{R^n}
  \big)^{1/2}
  \le c_{\alpha,\sigma} \varepsilon^{\sigma+n/2-|\alpha|}.
  \label{Hsigma-eq}
\end{equation}
This asymptotics for $\varepsilon\to0$ always holds for $\sigma=0$,
as was proved in \cite[Lem.~9.3.2]{H97}.

For $\sigma>0$ the faster convergence to $0$ in \eqref{Hsigma-eq} 
was proved in \cite{H89} to imply that $a(x,D)$ is bounded on $u\in
\cal S(\Rn)$,
\begin{equation}
  \nrm{a(x,D)u}{H^{s}}\le c_s \nrm{u}{H^{s+d}} \quad\text{for}\quad s>-\sigma.
  \label{Hssigma-eq}
\end{equation}
The reader could consult \cite[Thm.~9.3.5]{H97} for this (and
\cite[Thm.~9.3.7]{H97} for four pages of proof of necessity of 
$s\ge -\sup\sigma$, with supremum over all $\sigma$ for which
\eqref{Hsigma-eq} holds).

If $\hat a$ is so small along $\cal T$ that 
\eqref{Hsigma-eq} holds for all $\sigma\in \R$, 
consequently there is boundedness $H^{s+d}\to H^s$ for
all $s\in \R$. Eg this validity of \eqref{Hsigma-eq} for all $\sigma$ is implied
by \eqref{tdc-cnd}, for since
\begin{equation}
  \supp \hat a_{\chi,\varepsilon}\subset 
  \{\,(\xi,\eta)\mid 1+|\xi+\eta|\le 2\varepsilon|\eta| \,\},
\label{a*tdc-eq}
\end{equation}
it is clear that \eqref{tdc-cnd} gives 
$a_{\chi,\varepsilon}\equiv 0$ whenever $0<2\varepsilon<1/B$.

More generally \eqref{Hsigma-eq} enters a characterisation of
the $a\in S^d_{1,1}$ for which the adjoint symbol
\begin{equation}
  a^*(x,\eta)=e^{\im D_x\cdot D_\eta}\bar a(x,\eta)  
\end{equation}
is again in $S^d_{1,1}$; cf the below condition \eqref{a*-cnd}. 
Since adjoining is an involution, such symbols constitute the class
\begin{equation}
  \tilde S^d_{1,1}:= S^d_{1,1}\cap (S^d_{1,1})^* .
\end{equation}

\begin{thm}   \label{a*-thm}
For a symbol $a(x,\eta)$ in $S^d_{1,1}(\Rn\times \Rn)$ the following
properties are equivalent:
\begin{rmlist}
    \item   \label{a*-cnd}
    $a(x,\eta)$ belongs to $\tilde S^d_{1,1}(\Rn\times \Rn)$.
  \item   \label{orderN-cnd}
  For arbitrary $N>0$ and $\alpha$, $\beta$ there is a constant
$C_{\alpha,\beta,N}$ such that 
\begin{equation}
  |D^\alpha_\eta D^\beta_x a_{\chi,\varepsilon}(x,\eta)|\le C_{\alpha,\beta,N}
  \varepsilon^{N}(1+|\eta|)^{d-|\alpha|+|\beta|}
  \quad\text{for}\quad 0<\varepsilon<1.
\end{equation} 
  \item   \label{sigma-cnd}
  The seminorm $N_{\chi,\varepsilon,\alpha}(a)$ fulfils
  \eqref{Hsigma-eq} for all $\sigma\in\R$. 
\end{rmlist}
In the affirmative case $a\in \tilde S^{d}_{1,1}$, and there is an estimate
\begin{equation}
  |D^\alpha_\eta D^\beta_x a^*(x,\eta)| \le 
  (C_{\alpha,\beta}(a)+C'_{\alpha,\beta,N})(1+|\eta|)^{d-|\alpha|+|\beta|}
\end{equation}
for a certain continuous seminorm 
$C_{\alpha,\beta}$ on $S^d_{1,1}(\Rn\times \Rn)$ and a finite sum
$C'_{\alpha,\beta,N}$ of constants fulfilling 
the inequalities in \eqref{orderN-cnd}.
\end{thm}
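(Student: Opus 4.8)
The plan is to establish the cycle of implications $\ref{a*-cnd}\Rightarrow\ref{sigma-cnd}\Rightarrow\ref{orderN-cnd}\Rightarrow\ref{a*-cnd}$ (with the quantitative estimate on $a^*$ falling out along the way), exploiting the symmetry that adjoining $a\mapsto a^*$ is an involution so that $\tilde S^d_{1,1}$ is closed under it. The backbone of all three steps is the classical asymptotic formula for the adjoint symbol,
\begin{equation}
  a^*(x,\eta)=\sum_{|\gamma|<M}\frac{1}{\gamma!}\partial^\gamma_\eta D^\gamma_x\bar a(x,\eta)
  +r_M(x,\eta),
\end{equation}
together with its localised version obtained by inserting the cut-off $\chi(\xi+\eta,\varepsilon\eta)$ under the Fourier transform in $x$: one observes that passing from $a$ to $a_{\chi,\varepsilon}$ commutes suitably with $e^{\im D_x\cdot D_\eta}$ up to controllable errors, because $\chi$ is supported where $|\xi|\le|\eta|$ and is homogeneous of degree $0$ off a compact set. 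This is essentially H{\"o}rmander's machinery from \cite[Ch.~9]{H97}, and I would quote \cite[Lem.~9.3.2]{H97} for the baseline $\sigma=0$ case of \eqref{Hsigma-eq} and reorganise the rest.

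First I would prove $\ref{sigma-cnd}\Rightarrow\ref{orderN-cnd}$, which is the most concrete step. Given the $L_2$-averaged bound \eqref{Hsigma-eq} for all $\sigma$, one recovers pointwise bounds on $D^\alpha_\eta D^\beta_x a_{\chi,\varepsilon}$ by a Sobolev embedding on the dyadic shell $R\le|\eta|\le 2R$: differentiating $a_{\chi,\varepsilon}$ in $x$ only multiplies $\hat a_{\chi,\varepsilon}$ by $\xi$, and on $\supp\chi(\cdot,\varepsilon\eta)$ one has $|\xi|\lesssim|\eta|\sim R$, so each $x$-derivative costs a factor $R$ as required; the $\sigma$ in the exponent is then free, giving the $\varepsilon^N$ decay for every $N$. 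Next, $\ref{orderN-cnd}\Rightarrow\ref{a*-cnd}$: split $a=a_{\chi,\varepsilon}+(a-a_{\chi,\varepsilon})$ for a fixed small $\varepsilon$. The piece $a-a_{\chi,\varepsilon}$ has $\hat a$ supported away from the twisted diagonal, i.e. where $2\varepsilon|\eta|\le 1+|\xi+\eta|$, and for such symbols the standard oscillatory-integral estimate for $e^{\im D_x\cdot D_\eta}$ converges and keeps the symbol in $S^d_{1,1}$ (indeed in $\tilde S^d_{1,1}$) with seminorms controlled by a continuous seminorm $C_{\alpha,\beta}(a)$ on $S^d_{1,1}$ — this is where the first term in the asserted estimate on $a^*$ comes from. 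The piece $a_{\chi,\varepsilon}$ is, by \ref{orderN-cnd}, so small (order $\varepsilon^N$ for all $N$) that a crude application of the asymptotic expansion with a large but finite $M$ shows its adjoint obeys the symbol estimates of order $d$ with constants that are finite sums of the $C_{\alpha,\beta,N}$ from \ref{orderN-cnd}; taking adjoints once more and using that the construction is symmetric in $a$ and $\bar a$ gives membership in $\tilde S^d_{1,1}$ and the second term $C'_{\alpha,\beta,N}$.

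Finally $\ref{a*-cnd}\Rightarrow\ref{sigma-cnd}$ is the reverse direction: assuming both $a$ and $a^*$ lie in $S^d_{1,1}$, one must show the averaged quantity $N_{\chi,\varepsilon,\alpha}(a)$ decays faster than any power of $\varepsilon$. The idea is that on the support of $\chi(\xi+\eta,\varepsilon\eta)$ the frequency variable $\xi$ of $x$ and the frequency $\eta$ are nearly antipodal, which is exactly the regime where the symbol estimates for $a$ and for $a^*$ reinforce rather than cancel; subtracting the asymptotic expansion of $a^*$ (which is built from $a$) forces cancellation of all finitely many leading terms near $\cal T$, leaving a remainder that gains the required powers of $\varepsilon$ upon integration over a shell. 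Concretely one writes $a_{\chi,\varepsilon}=\sum_{|\gamma|<M}c_\gamma \partial^\gamma(\cdots)+(\text{remainder})$ using that $a^*\in S^d_{1,1}$ to bound the remainder, and counts powers of $\varepsilon$ coming from the $\varepsilon\eta$-scaling of $\chi$. I expect this implication to be the main obstacle: it is the only place where one genuinely needs to exploit the hypothesis on $a^*$ (the other two directions are comparatively soft consequences of the expansion), and getting the exact exponent $\sigma+n/2-|\alpha|$ in \eqref{Hsigma-eq} for \emph{every} $\sigma$ requires carefully tracking how many derivatives of $\chi$ land on the $\varepsilon\eta$ slot versus the $\xi+\eta$ slot. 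I would handle it by a Taylor expansion of $a^*-a$ along the twisted diagonal combined with the homogeneity of $\chi$, following the computation underlying \cite[Thm.~9.3.5, Thm.~9.3.7]{H97}.
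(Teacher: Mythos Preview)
The paper does not actually prove this theorem. Immediately after stating it, the author attributes the result to H{\"o}rmander --- the equivalence of \eqref{a*-cnd} and \eqref{orderN-cnd} is cited as \cite[Thm.~4.2]{H88} and \cite[Thm.~9.4.2]{H97}, with \eqref{sigma-cnd} said to be covered by brief remarks in the latter --- and for a full proof refers the reader to the technical report \cite{JJ10tmp}. There is thus no in-paper proof to compare your proposal against; the theorem is quoted as background.

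For what it is worth, your outline tracks the standard H{\"o}rmander argument fairly closely. The equivalence of \eqref{orderN-cnd} and \eqref{sigma-cnd} is indeed the soft part (pointwise bounds trivially imply the $L_2$-averaged ones, and the converse is a Sobolev-type embedding on dyadic shells together with the observation that on $\supp\hat a_{\chi,\varepsilon}$ one has $|\xi|\sim|\eta|$, so $x$-derivatives cost powers of $|\eta|$). The substantial content is \eqref{a*-cnd}$\Leftrightarrow$\eqref{orderN-cnd}, which is precisely \cite[Thm.~9.4.2]{H97}, and your sketch of both directions --- splitting off $a_{\chi,\varepsilon}$ and handling the complementary piece via the twisted diagonal condition for one direction, and exploiting cancellation near $\cal T$ forced by $a^*\in S^d_{1,1}$ for the other --- is the right shape. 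You are also right that the implication \eqref{a*-cnd}$\Rightarrow$\eqref{orderN-cnd} (equivalently \eqref{sigma-cnd}) is where the real work lies; this is the computation behind \cite[Thm.~9.4.2]{H97}, and your plan to Taylor-expand along $\cal T$ using the homogeneity of $\chi$ is how it is done there.
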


It should be observed that $a(x,\eta)$ fulfils
\eqref{a*-cnd}  if and only if 
$a^*(x,\eta)$ does so (neither \eqref{orderN-cnd} nor
\eqref{sigma-cnd} make this obvious).
But \eqref{orderN-cnd} immediately gives the inclusion 
$\tilde S^d_{1,1}\subset \tilde S^{d'}_{1,1}$  for $d'>d$.
Condition \eqref{sigma-cnd} is
close in spirit to the Mihlin--H{\"o}rmander multiplier theorem
and is useful for the estimates to follow in Section~\ref{Lp-sect}.

The theorem was undoubtedly known to 
H{\"o}rmander, who stated the equivalence of \eqref{a*-cnd} 
and \eqref{orderN-cnd} explicitly in \cite[Thm.~4.2]{H88} and
\cite[Thm.~9.4.2]{H97}, in the latter 
with brief remarks on \eqref{sigma-cnd}. 

As a corollary to the proof of Theorem~\ref{a*-thm}, for which the
reader also may consult \cite{JJ10tmp}, the vanishing
frequency modulation gave the following main result in \cite[Thm.~4.6]{JJ10tmp}:

\begin{thm}   \label{tildeS-thm}
If $a(x,\eta)$ is in the class $\tilde
S^d_{1,1}(\Rn\times \Rn)$, characterised in Theorem~\ref{a*-thm}, then
\begin{equation}
  a(x,D)\colon \cal S'(\Rn)\to \cal S'(\Rn)
\end{equation}
is everywhere defined and continuous, and it equals the adjoint of 
$\OP(e^{\im D_x\cdot D_\eta}\bar a(x,\eta))$.
\end{thm}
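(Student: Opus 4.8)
\emph{Plan.} The idea is to reduce the assertion to a single convergence statement about the symbols occurring in Definition~\ref{a11-defn}, and then to verify that statement by a short commutation computation in which Theorem~\ref{a*-thm} supplies the only non‑elementary ingredient. Two elementary facts are recorded first. (i) Every $b\in S^d_{1,1}(\Rn\times\Rn)$ induces a continuous map $\OP(b)\colon\cal S(\Rn)\to\cal S(\Rn)$, and a given $\cal S$‑seminorm of $\OP(b)\varphi$ is bounded by a product of finitely many of the symbol seminorms \eqref{pab-eq} of $b$ with finitely many $\cal S$‑seminorms of $\varphi$ — this follows by commuting $x^\gamma$ and $D^\beta_x$ through the oscillatory integral \eqref{axDu-id} and using \eqref{pab-eq}; in particular $b_m\to b$ in $S^{d'}_{1,1}$ forces $\OP(b_m)\varphi\to\OP(b)\varphi$ in $\cal S$. (ii) For any modulation function $\psi$ the truncations $\psi(2^{-m}D_x)b$ converge to $b$ in $S^{d'}_{1,1}$ for every $d'>d$ (convolution by $\psi(2^{-m}D_x)$ in $x$ is an approximate unit, and the harmless loss $d\rightsquigarrow d'$ absorbs the commutator with $D_x$). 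Since $a\in\tilde S^d_{1,1}$, Theorem~\ref{a*-thm} gives $a^*:=e^{\im D_x\cdot D_\eta}\bar a\in S^d_{1,1}$ with controlled seminorms, so by (i) the adjoint $A:=(\OP(a^*))^*$ is a well‑defined continuous operator $\cal S'(\Rn)\to\cal S'(\Rn)$; as adjoining symbols is an involution, the symbol of $A$ is $a$, which justifies denoting it $a(x,D)$ once the identification below is in place.

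\emph{Reduction.} It suffices to prove that for every $u\in\cal S'(\Rn)$ and every modulation function $\psi$ one has
\[
  \OP\bigl(\psi(2^{-m}D_x)a\,\psi(2^{-m}\eta)\bigr)u\longrightarrow Au\quad\text{in }\cal D'(\Rn)\qquad(m\to\infty),
\]
since this single statement yields at once that $D(a(x,D))=\cal S'$, that the limit is independent of $\psi$, that $a(x,D)=A$, and hence the asserted continuity. Write $a_m:=\psi(2^{-m}D_x)a\,\psi(2^{-m}\eta)$; it lies in $S^{-\infty}$, where the exact adjoint calculus is valid, so testing against $\varphi\in C_0^\infty(\Rn)$ and moving $\OP(a_m)$ onto $\varphi$ reduces everything to
\[
  \OP(a_m^*)\varphi\longrightarrow\OP(a^*)\varphi\quad\text{in }\cal S(\Rn),\qquad a_m^*:=e^{\im D_x\cdot D_\eta}\overline{a_m}\in S^{-\infty}.
\]
The conjugations entering here merely replace $\psi$ by the modulation functions $\zeta\mapsto\overline{\psi(\pm 2^{-m}\zeta)}$, which is precisely why the outcome will not depend on $\psi$.

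\emph{The key convergence.} Since $\psi(2^{-m}\eta)$ acts on $u$ before $a(x,D)$, one has $\OP(a_m)=\OP(\psi(2^{-m}D_x)a)\circ\psi(2^{-m}D)$; adjoining and using that $e^{\im D_x\cdot D_\eta}$ commutes with every Fourier multiplier in $x$ gives
\[
  \OP(a_m^*)=p_m(D)\circ\OP\bigl(q_m(D_x)\,a^*\bigr),
\]
with $p_m(\zeta)=\overline{\psi(2^{-m}\zeta)}$ and $q_m(\xi)=\overline{\psi(-2^{-m}\xi)}$, both in $C_0^\infty$ and equal to $1$ near the origin. By fact (ii), $q_m(D_x)a^*\to a^*$ in $S^{d'}_{1,1}$ for each $d'>d$, whence $\OP(q_m(D_x)a^*)\varphi\to\OP(a^*)\varphi$ in $\cal S$ by (i); applying $p_m(D)$, which is bounded on $\cal S$ uniformly in $m$ and satisfies $p_m(D)g\to g$ in $\cal S$ for each fixed $g\in\cal S$, preserves this convergence. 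This establishes the displayed limit, and hence the theorem: for any $u\in\cal S'$ and $\varphi\in C_0^\infty$, moving $\OP(a_m)$ back and letting $m\to\infty$ gives $\langle\OP(a_m)u,\varphi\rangle\to\langle Au,\varphi\rangle$.

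\emph{Main obstacle.} The substance is concentrated in the single non‑elementary input, Theorem~\ref{a*-thm}: it is exactly the membership $a^*\in S^d_{1,1}$ — equivalently the twisted‑diagonal smallness \eqref{Hsigma-eq} for all $\sigma\in\R$ — that makes $\OP(a^*)$, and thereby its adjoint, a genuine operator on $\cal S'$; for an arbitrary $a\in S^d_{1,1}$ the right‑hand side of the theorem need not even be defined, as the Ching‑type symbols of Example~\ref{Ching-exmp} show. Everything else is careful bookkeeping: verifying the two elementary facts (in particular the order loss $d\rightsquigarrow d'$ when passing frequency truncations to the limit), tracking the conjugations so that the argument stays uniform over all admissible modulation functions, and checking the operator identity $\OP(a_m^*)=p_m(D)\circ\OP(q_m(D_x)a^*)$ on the smoothing symbols $a_m$.
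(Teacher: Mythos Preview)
Your argument is correct and follows the natural route. The paper does not spell out a proof here; it merely states that Theorem~\ref{tildeS-thm} is obtained in \cite[Thm.~4.6]{JJ10tmp} as a corollary to the proof of Theorem~\ref{a*-thm}. Your proposal is precisely such a corollary: the only substantive input is $a^*\in S^d_{1,1}$, after which the identification $a(x,D)=(\OP(a^*))^*$ follows by the adjoint calculus for the smoothing approximants $a_m$ together with the elementary convergences (i) and (ii). Your operator identity $\OP(a_m^*)=p_m(D)\circ\OP(q_m(D_x)a^*)$ with $p_m(\zeta)=\overline{\psi(2^{-m}\zeta)}$, $q_m(\xi)=\overline{\psi(-2^{-m}\xi)}$ is verified by the computation $\overline{\psi(2^{-m}D_x)a}=q_m(D_x)\bar a$ and the commutation of $e^{\im D_x\cdot D_\eta}$ with any Fourier multiplier in $x$; the $\varepsilon/3$ argument for $p_m(D)g_m\to g$ is justified since the multipliers $p_m$ have uniformly bounded $S^0_{1,0}$-seminorms.

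Two small points worth making explicit in a final write-up. First, the pairing issue: Definition~\ref{a11-defn} asks for convergence in $\cal D'$, hence with respect to the bilinear pairing $\langle\cdot,\cdot\rangle$, whereas the adjoint $a^*=e^{\im D_x\cdot D_\eta}\bar a$ refers to the sesquilinear $L_2$-pairing. This is harmless since $\langle\OP(a_m)u,\varphi\rangle=\langle u,\overline{\OP(a_m^*)\bar\varphi}\rangle$, and your convergence $\OP(a_m^*)\bar\varphi\to\OP(a^*)\bar\varphi$ in $\cal S$ handles that as well. Second, fact~(ii) as stated (convergence in $S^{d'}_{1,1}$ for \emph{every} $d'>d$) needs a one-line justification beyond the naive mean-value estimate, which only gives $d'=d+1$: use that $\psi(2^{-m}D_x)a^*$ is uniformly bounded in $S^d_{1,1}$ and converges to $a^*$ locally uniformly, then split the $S^{d'}_{1,1}$-seminorm into $|\eta|\le R$ and $|\eta|>R$.
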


\section{Pointwise estimates}   \label{pe-sect}
A main technique in this paper will be to estimate 
$|a(x,D)u(x)|$ at an arbitrary point of $\Rn$.
The recent results on this by the author \cite{JJ11pe} are recalled here for
convenience of the reader.

\subsection{The factorisation inequality}   \label{revw-ssect}
When $\supp \hat u$ is compact in $\Rn$, the action on $u$ by
$a(x,D)$ can be \emph{separated} from $u$ at the cost of an estimate,
which is the \emph{factorisation inequality}
\begin{equation}
  |a(x,D)u(x)|\le F_a(N,R;x) u^*(N,R;x).
  \label{Fau*-eq}
\end{equation} 
Here $u^*$ denotes the maximal function of Peetre--Fefferman--Stein type,
defined as
\begin{equation}
  u^*(N,R;x)=\sup_{y\in \Rn}\frac{|u(x-y)|}{(1+R|y|)^N}
  =\sup_{y\in \Rn}\frac{|u(y)|}{(1+R|x-y|)^N}
  \label{u*-eq}
\end{equation}
when $\supp\hat u\subset \overline{B}(0,R)$; cf.\ \eqref{aFE-id}.
The parameter $N$ may eg be chosen so that $N\ge \order(\hat u)$.

The symbol factor $F_a(x)$ only depends on $u$ in
a vague way, viz.\ through $N$
and $R$:
\begin{equation}
  F_a(N,R;x)= \int_{\Rn} (1+R|y|)^N 
  |\cal F^{-1}_{\eta\to y}(a(x,\eta)\chi(\eta ))|\,dy,
  \label{Fa-id}
\end{equation}
where the auxiliary function $\chi\in C^\infty_0(\Rn)$ 
should equal $1$ on a neighbourhood of
$\supp\hat u$.
However, $\chi$ is left out from the notation 
in $F_a(x)$, as this would be redundant by the results below in
Theorem~\ref{Fa-thm}.

The estimate \eqref{Fau*-eq} is useful as both factors are easily
controlled. Eg $u^*(x)$ is polynomially bounded, for
$|u(y)|\le c(1+|y|)^N\le c(1+R|y-x|)^N(1+|x|)^N$ holds according
to the Paley--Wiener--Schwartz Theorem if $N\ge \order(\hat u)$, $R\ge 1$,
and by \eqref{u*-eq} this implies
\begin{equation}
  u^*(N,R;x)\le c (1+|x|)^N,\qquad x\in \Rn.
  \label{u*PWS-eq}
\end{equation}

The non-linear map $u\mapsto u^*$ is also bounded with
respect to the $L_p$-norm on the subspace $L_p\cap\cal F^{-1}\cal
E'$. This can be shown in an elementary way; cf \cite[Thm.~2.6]{JJ11pe}.

Secondly, for the symbol factor one has 
$F_a\in C(\Rn)\cap L_\infty (\Rn)$ with estimates
highly reminiscent of the Mihlin--H{\"o}rmander conditions for Fourier
multipliers:

\begin{thm}
  \label{Fa-thm}
Assume the symbol
$a(x,\eta)$ is in $S^d_{1,1}(\Rn\times\Rn)$ and let $F_a(N,R;x)$ be given by  
\eqref{Fa-id} for parameters $R,N>0$, with the auxiliary function
taken as $\chi=\psi(R^{-1}\cdot)$ for $\psi\in C^\infty_0(\Rn)$ equalling
$1$ in a set with non-empty interior. Then it holds for all $x\in \Rn$ that
\begin{equation}
 0\le F_a(x) \le c_{n,N} \sum_{|\alpha|\le [N+\frac{n}{2}]+1} 
  \Big(\int_{R\supp \psi} |R^{|\alpha|}D^{\alpha}_\eta a(x,\eta)|^2
    \,\frac{d\eta}{R^n}
  \Big)^{1/2}.
  \label{FaMH-eq}
\end{equation}
\end{thm}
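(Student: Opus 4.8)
The claim is a pointwise-in-$x$ bound on the weighted $L_1$-norm appearing in \eqref{Fa-id}, so $x$ is frozen throughout and everything reduces to a statement about the single function $\eta\mapsto a(x,\eta)\chi(\eta)$, which is smooth and compactly supported in $\eta$ (supported in $R\supp\psi$). The plan is to pass the weight $(1+R|y|)^N$ onto the Fourier side by the elementary inequality $(1+R|y|)^N\le c_N\sum_{|\gamma|\le N}|(Ry)^\gamma|$ and to recognise $(Ry)^\gamma \cal F^{-1}_{\eta\to y}(a\chi) = R^{|\gamma|}\cal F^{-1}(D_\eta^\gamma(a\chi))$ up to the usual $i$-factors. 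Thus
\[
  F_a(N,R;x)\le c_N\sum_{|\gamma|\le N}\int_{\Rn}
  \bigl|\cal F^{-1}_{\eta\to y}\bigl(R^{|\gamma|}D_\eta^\gamma(a(x,\eta)\chi(\eta))\bigr)\bigr|\,dy,
\]
so it suffices to estimate each $L_1(\Rn_y)$-norm of an inverse Fourier transform by an $L_2$-norm on the $\eta$-side. The second step is the standard Cauchy--Schwarz / Plancherel trick: write $\|g\|_{L_1} = \|(1+|y|^2)^{-M/2}(1+|y|^2)^{M/2}g\|_{L_1}\le \|(1+|y|^2)^{-M/2}\|_{L_2}\,\|(1+|y|^2)^{M/2}g\|_{L_2}$, choosing $2M>n$ so the first factor is a finite constant $c_n$, and then using Plancherel to turn $\|(1+|y|^2)^{M/2}\cal F^{-1}h\|_{L_2}$ into $\|(1-\Delta_\eta)^{M/2}h\|_{L_2}$. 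Taking $M$ to be the least integer with $2M>n$, i.e. absorbing a factor roughly $\lfloor n/2\rfloor+1$ of derivatives, and combining with the $N$ derivatives already present, one is led to a sum over $|\alpha|\le \lfloor N+\tfrac n2\rfloor+1$ of terms $\|R^{|\alpha|}D_\eta^\alpha(a\chi)\|_{L_2(d\eta)}$, with the normalising factor $R^{-n}$ coming out when one rescales $\eta\mapsto R\eta$ so that the measure $d\eta/R^n$ and the dilation-invariant combination $R^{|\alpha|}D^\alpha$ appear as in \eqref{FaMH-eq}.

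The remaining, slightly fussy point is to remove the cut-off $\chi$ from the right-hand side, i.e. to bound $\|R^{|\alpha|}D^\alpha_\eta(a(x,\eta)\chi(\eta))\|_{L_2}$ by the corresponding quantity with $a$ alone integrated over $R\supp\psi$. Here one uses the Leibniz rule, $D^\alpha_\eta(a\chi)=\sum_{\beta\le\alpha}\binom\alpha\beta D^\beta_\eta a\, D^{\alpha-\beta}_\eta\chi$, together with $\chi=\psi(R^{-1}\cdot)$, which gives $|D^{\alpha-\beta}_\eta\chi(\eta)|\le c\, R^{-|\alpha-\beta|}$ with constants depending only on $\psi$; thus each $R^{|\alpha|}D^\beta_\eta a\cdot D^{\alpha-\beta}_\eta\chi$ is dominated by $c\,R^{|\beta|}|D^\beta_\eta a|$, and since $\supp\chi\subset R\supp\psi$ the $\eta$-integration is confined to that set. (One should take $\psi$ so that $R\supp\psi$ contains a neighbourhood of $\supp\hat u$, as required for \eqref{Fa-id}; the hypothesis that $\psi=1$ on a set with nonempty interior is what makes the resulting bound nontrivial, but it plays no role in the estimate itself.) Finally $F_a(x)\ge0$ is immediate from \eqref{Fa-id} since it is the integral of an absolute value, and $F_a\in C(\Rn)\cap L_\infty$ follows because the right-hand side of \eqref{FaMH-eq} is finite and continuous in $x$ thanks to the symbol estimates \eqref{pab-eq} (the integrand is bounded by $c(1+|\eta|)^{2(d-|\alpha|)}$ on the fixed compact set $R\supp\psi$, uniformly in $x$, and depends continuously on $x$).

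The only genuine obstacle is bookkeeping: making sure the number of $\eta$-derivatives that must be controlled really is $\lfloor N+\tfrac n2\rfloor+1$ and not something larger, and that the powers of $R$ combine into the scale-invariant expression $R^{|\alpha|}D^\alpha_\eta a$ against $d\eta/R^n$. This is handled cleanly by doing the substitution $\eta = R\zeta$ at the very start, so that $\chi(\eta)=\psi(\zeta)$, the weight becomes $(1+|Ry|\cdot|\zeta|\text{-scaling})$ — more precisely one works with $w=Ry$ as the dual variable to $\zeta$ — and $R^{|\alpha|}D^\alpha_\eta a(x,\eta)=D^\alpha_\zeta\bigl(a(x,R\zeta)\bigr)$; then the weight exponent $N$ contributes $N$ derivatives and the Plancherel step contributes exactly $\lfloor n/2\rfloor+1$ more, with the total capped at $\lfloor N+\tfrac n2\rfloor+1$ after noting $\lfloor N\rfloor+\lfloor n/2\rfloor+1\le\lfloor N+\tfrac n2\rfloor+1$. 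No deep input is needed beyond Plancherel, Cauchy--Schwarz, Leibniz' rule, and the defining symbol inequalities \eqref{pab-eq}.
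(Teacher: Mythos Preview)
Your approach is the standard elementary argument and matches what the paper intends; the paper itself does not prove this theorem but refers to \cite{JJ11pe} for ``the elementary proof'', which is precisely the Cauchy--Schwarz/Plancherel scheme you describe, carried out after the rescaling $\eta=R\zeta$, $w=Ry$.

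There is, however, a genuine bookkeeping slip in your derivative count when $N\notin\N$. The inequality $(1+R|y|)^N\le c_N\sum_{|\gamma|\le N}|(Ry)^\gamma|$ is false for non-integer $N$ unless the sum runs over $|\gamma|\le\lceil N\rceil$; combined with the $\lfloor n/2\rfloor+1$ derivatives from your Plancherel step this yields $\lceil N\rceil+\lfloor n/2\rfloor+1$, which can exceed $[N+\tfrac n2]+1$ (e.g.\ $N=\tfrac32$, $n=2$ gives $4$ versus the claimed $3$). Your final inequality $\lfloor N\rfloor+\lfloor n/2\rfloor+1\le\lfloor N+\tfrac n2\rfloor+1$ is correct but beside the point, since the two-step method produces $\lceil N\rceil$, not $\lfloor N\rfloor$.

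The fix is to merge the two steps rather than handle the weight and the $L_1\to L_2$ passage separately. After the substitution you have $F_a(x)=\int(1+|w|)^N|\check b(w)|\,dw$ with $b(\zeta)=a(x,R\zeta)\psi(\zeta)$; now bound $(1+|w|)^N\le 2^{N/2}(1+|w|^2)^{N/2}$ and apply Cauchy--Schwarz once with the splitting
\[
(1+|w|^2)^{N/2}=(1+|w|^2)^{(N-L)/2}\cdot(1+|w|^2)^{L/2},
\qquad L:=\big[N+\tfrac n2\big]+1.
\]
Then $2(L-N)>n$ makes $(1+|w|^2)^{(N-L)/2}\in L_2$, while for the integer $L$ one has $(1+|w|^2)^L=\sum_{|\alpha|\le L}c_\alpha|w^\alpha|^2$, so Plancherel gives exactly $\sum_{|\alpha|\le L}\|D^\alpha_\zeta b\|_{L_2}$. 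Your Leibniz argument, together with $D^\alpha_\zeta\bigl(a(x,R\zeta)\bigr)=R^{|\alpha|}(D^\alpha_\eta a)(x,R\zeta)$ and the change of variables $\eta=R\zeta$, then yields \eqref{FaMH-eq} with the correct top order $[N+\tfrac n2]+1$.
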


For the elementary proof the reader is referred to
Theorem~4.1 and Section~6 in \cite{JJ11pe}.

\begin{rem}
  \label{Fa-rem}
A further analysis of $F_a$'s dependence on $R$ was given in
\cite[Sect.~4]{JJ11pe}. Eg when the cut-off function $\psi$ in
Theorem~\ref{Fa-thm} vanishes around the origin, then 
$F_a(x)=\cal O(R^d)$ for $a\in S^d_{1,1}$.
Moreover, when this is applied to symbols of the form
$a_Q(x,\eta)=\varphi(Q^{-1}D_x)a(x,\eta)$, $Q>0$, 
with $\varphi=0$ around the origin,
there is the sharpening $F_{a_Q}(x)=\cal O(Q^{-M}R^{d+M})$.
\end{rem}

\section{Littlewood--Paley analysis}
  \label{corona-sect}

For type $1,1$-operators, Littlewood--Paley analysis will most
conveniently depart from the limit in \eqref{aPsi-eq} with an arbitrary modulation
function $\psi$.
As $\psi$ is a test function, this gives in the usual way a
Littlewood--Paley decomposition $1=\psi(\eta)+\sum_{j=1}^\infty 
\varphi(2^{-j}\eta)$ by setting $\varphi=\psi-\psi(2\cdot )$.
Note here that if $\psi\equiv 1$ for $|\eta|\le r$ while $\psi\equiv 0$ for
$|\eta|\ge R$, one can fix an integer $h\ge 2$ so that $2R< r2^h$.
Then
\begin{equation}
  \varphi(2^{-j}\eta)\ne0\implies 
  r2^{j-1}\le|\eta|\le R2^j.
\end{equation}

Inserting twice into \eqref{aPsi-eq} that
$\psi(2^{-m}\eta)=\psi(\eta)+\varphi(2^{-1}\eta)+
\dots +\varphi(2^{-m}\eta)$, the paradifferential splitting from the 1980's is
recovered: 
if $a(x,\eta)$ is in $S^d_{1,1}$, and
$u\in {\cal S}'(\Rn)$, then
\begin{equation}
  a_{\psi}(x,D)u=
  a_{\psi}^{(1)}(x,D)u+a_{\psi}^{(2)}(x,D)u+a_{\psi}^{(3)}(x,D)u,
  \label{a123-eq}
\end{equation}
whenever the three series below all converge in ${{\cal D}}'$ 
(cf.\ Remark~\ref{conv-rem}),
\begin{align}
    a_{\psi}^{(1)}(x,D)u&=\sum_{k=h}^\infty \sum_{j\le k-h} a_j(x,D)u_k
  =\sum_{k=h}^\infty a^{k-h}(x,D)u_k
  \label{a1-eq}\\
  a_{\psi}^{(2)}(x,D)u&= \sum_{k=0}^\infty
               \bigl(a_{k-h+1}(x,D)u_k+\dots+a_{k-1}(x,D)u_k+a_{k}(x,D)u_k
\notag\\[-2\jot]
   &\hphantom{= \sum_{k=0}^\infty\bigl(a_{k-h+1}(x,D)u_k+\dots{}}%
                {}+a_{k}(x,D)u_{k-1} +\dots+a_k(x,D)u_{k-h+1}\bigr) 
  \label{a2-eq}\\
   a_{\psi}^{(3)}(x,D)u&=\sum_{j=h}^\infty\sum_{k\le j-h}a_j(x,D)u_k
   =\sum_{j=h}^\infty a_j(x,D)u^{j-h}.
  \label{a3-eq}
\end{align}
Here $u_k=\varphi(2^{-k}D)u$ while
$a_k(x,\eta)=\varphi(2^{-k}D_x)a(x,\eta)$;
by convention $\varphi$ is replaced by $\psi$ for $k=0$ and 
$u_k\equiv 0\equiv a_k$ for $k<0$.
In addition superscripts are used for the convenient shorthands 
$u^{k-h}=\psi(2^{h-k}D)u$
and $a^{k-h}(x,D)=\sum_{j\le k-h}a_j(x,D)=\op{OP}(\psi(2^{h-k}D_x)a(x,\eta))$. 
Using this, there is a
brief version of \eqref{a2-eq},
\begin{equation}
 a_{\psi}^{(2)}(x,D)u=\sum_{k=0}^\infty
((a^{k}-a^{k-h})(x,D)u_k+a_k(x,D)(u^{k-1}-u^{k-h})). 
  \label{a2'-eq}
\end{equation}
Occasionally the subscripts $\psi$ are omitted, as done already in the
summands in \eqref{a1-eq}--\eqref{a3-eq}.

The main point here is that the series have
the following inclusions for the spectra of the summands in 
\eqref{a1-eq}, \eqref{a3-eq} and \eqref{a2'-eq},
with $R_h=\tfrac{r}{2}-R2^{-h}>0$:
\begin{gather}
  \supp{\cal F}(a^{k-h}(x,D)u_k)\subset
  \bigl\{\,\xi\bigm| 
  R_h2^k\le|\xi|\le \tfrac{5R}{4} 2^k\,\bigr\},
  \label{supp1-eq}  \\
  \supp{\cal F}(a_k(x,D)u^{k-h})\subset
  \bigl\{\,\xi \bigm| 
  R_h2^k\le|\xi|\le \tfrac{5R}{4} 2^k\,\bigr\},
  \label{supp3-eq}  \\
  \supp{\cal F}\big(a_k(x,D)(u^{k-1}-u^{k-h})\big) \bigcup
  \supp{\cal F}\big((a^k-a^{k-h})(x,D)u_k\big)\subset
  \overline{B}(0,2R2^k)
  \label{supp2-eq}
\end{gather}
Such spectral corona and ball properties
have been known since the 1980's (e.g.\ \cite[(5.3)]{Y1}) 
although they were verified then only for
elementary symbols $a(x,\eta)$, in the sense of Coifman and
Meyer~\cite{CoMe78}. However, this  restriction is redundant because of 
the \emph{Spectral Support Rule}, which for 
$u\in {\cal F}^{-1}{\cal E}'(\Rn)$ states that
\begin{equation}
     \supp{\cal F}(a(x,D)u)\subset
\bigl\{\,\xi+\eta \bigm| (\xi,\eta)\in \supp{\cal F}_{x\to\xi\,} a,\ 
     \eta\in \supp{\cal F} u \,\bigr\},
  \label{Sigma-eq}
\end{equation}
A short proof of this can  be found in \cite[App.~B]{JJ10tmp}
(cf.\ also \cite{JJ05DTL,JJ08vfm} for the full version).
Since \eqref{supp1-eq}--\eqref{supp2-eq} follow easily from \eqref{Sigma-eq},
cf.\ \cite{JJ05DTL,JJ10tmp}, details are omitted. 

Recently the pointwise estimates in Remark~\ref{Fa-rem}
were utilised for the following result. It was deduced  in
\cite[Thm.~5.1]{JJ11pe}, with extension to type $1,1$ in Section~6
there. 

\begin{thm} \label{a123-thm}
  For each $a(x,\eta)$ in $S^d_{1,1}$
the decomposition \eqref{a123-eq} is valid with the terms in
\eqref{a1-eq}--\eqref{a3-eq}
having spectral relations \eqref{supp1-eq},\eqref{supp3-eq}, \eqref{supp2-eq}
and pointwise estimates, cf Section~\ref{pe-sect}, 
\begin{align}
  |a^{k-h}(x,D)u_k(x)|&\le p(a) (R2^{k})^d u_k^*(N,R2^k;x),
  \label{a1-pe} \\
  |(a^k-a^{k-h})(x,D)u_k(x)|& \le  p(a) (R2^{k})^d u_k^*(N,R2^k;x),
  \label{a2-pe} \\
  |a^k(x,D)(u^{k-1}(x)-u^{k-h}(x))|& \le p(a) (R2^{k})^d 
  \sum_{l=1}^{h-1} 2^{-ld}u_{k-l}^*(N,R2^{k-l};x),
  \label{a2'-pe} \\
  |a_j(x,D)u^{j-h}(x)|&\le c_M 2^{-jM} p(a) 
  \sum_{k=0}^j (R2^{k})^{d+M} u_k^*(N,R2^k;x).
  \label{a3-pe}
\end{align}
Hereby $p(a)$ denotes a continuous seminorm on $S^d_{1,1}$ and $M\in \N$.
\end{thm}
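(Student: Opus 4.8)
The plan is to read off \eqref{a123-eq} from the regrouping of the partial sums in the defining limit \eqref{aPsi-eq}, and then to prove the spectral relations and the pointwise estimates \eqref{a1-pe}--\eqref{a3-pe} termwise. Inserting $\psi(2^{-m}D_x)=\sum_{j=0}^{m}\varphi(2^{-j}D_x)$ and $\psi(2^{-m}\eta)=\sum_{k=0}^{m}\varphi(2^{-k}\eta)$ (with $\varphi$ read as $\psi$ at index $0$) into \eqref{aPsi-eq}, the $m$-th approximant equals $\sum_{j,k\le m}a_j(x,D)u_k$; partitioning the index square $\{j,k\le m\}$ according to $j\le k-h$, $|j-k|\le h-1$, $k\le j-h$ reproduces exactly the $m$-th partial sums of \eqref{a1-eq}, of \eqref{a2-eq} in the form \eqref{a2'-eq}, and of \eqref{a3-eq}, the inner sums being already complete for $j,k\le m$ (e.g.\ $j\le m$ gives $u^{j-h}=\sum_{k=0}^{j-h}u_k$). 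Hence \eqref{a123-eq} holds whenever the three series converge in $\cal D'$, as its formulation requires; and every summand is well defined, since $u_k\in\cal F^{-1}\cal E'(\Rn)$ puts $a_j(x,D)u_k$, $a^{k-h}(x,D)u_k=\OP(\psi(2^{h-k}D_x)a)u_k$, etc., into $\cal O_M(\Rn)$ by \eqref{aFE-eq}.

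For the spectral relations I apply the Spectral Support Rule \eqref{Sigma-eq}. The modified symbols have controlled $x$-spectra: $\psi(2^{h-k}D_x)a$ is supported in $\xi$ in $\overline B(0,R2^{k-h})$, $\varphi(2^{-k}D_x)a$ in $\{r2^{k-1}\le|\xi|\le R2^{k}\}$, and $(\psi(2^{-k}D_x)-\psi(2^{h-k}D_x))a$ in $\overline B(0,R2^{k})$. Combined with $\supp\cal Fu_k\subset\{r2^{k-1}\le|\eta|\le R2^{k}\}$ and $\supp\cal Fu^{k-h}\subset\overline B(0,R2^{k-h})$, the rule yields for $a^{k-h}(x,D)u_k$ the frequency bounds $|\xi+\eta|\le R2^{k}+R2^{k-h}\le\tfrac{5R}{4}2^{k}$ (using $h\ge2$) and $|\xi+\eta|\ge r2^{k-1}-R2^{k-h}=R_h2^{k}>0$ (using $2R<r2^{h}$), which is \eqref{supp1-eq}; interchanging the roles of annulus and ball gives \eqref{supp3-eq}, and the plain triangle inequality gives \eqref{supp2-eq}.

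All four pointwise estimates run through the factorisation inequality \eqref{Fau*-eq}: a summand written as $b(x,D)v$ with $\supp\cal Fv\subset\overline B(0,\rho)$ satisfies $|b(x,D)v(x)|\le F_b(N,\rho;x)\,v^{*}(N,\rho;x)$, and Theorem~\ref{Fa-thm} with $\chi=\psi'(\rho^{-1}\cdot)$ gives
\[
F_b(N,\rho;x)\le c_{n,N}\!\!\sum_{|\alpha|\le[N+\frac{n}{2}]+1}\Bigl(\int_{\rho\supp\psi'}\bigl|\rho^{|\alpha|}D^\alpha_\eta b(x,\eta)\bigr|^2\,\frac{d\eta}{\rho^{\,n}}\Bigr)^{1/2}\le c\,p(b)\,\rho^{\,d'},
\]
where $d'$ is the order of $b$, $p(b)$ a continuous $S^{d'}_{1,1}$-seminorm, and the last step uses $|D^\alpha_\eta b|\le p(b)(1+|\eta|)^{d'-|\alpha|}$ together with $|\eta|\sim\rho$ on $\rho\supp\psi'$. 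For \eqref{a1-pe} and \eqref{a2-pe} I take $b=\psi(2^{h-k}D_x)a$, resp.\ $b=(\psi(2^{-k}D_x)-\psi(2^{h-k}D_x))a$, with $v=u_k$ and $\rho=R2^{k}$; convolution in $x$ by the $L_1$-normalised kernels $2^{(k-h)n}(\cal F^{-1}\psi)(2^{k-h}\cdot)$ does not enlarge $S^d_{1,1}$-seminorms, so $p(b)\le c\,p(a)$ uniformly in $k$, giving $(R2^{k})^{d}p(a)$. For \eqref{a2'-pe} I telescope $u^{k-1}-u^{k-h}=\sum_{l=1}^{h-1}u_{k-l}$ and apply the above with $b=\psi(2^{-k}D_x)a$, $v=u_{k-l}$, $\rho=R2^{k-l}$, then sum over $l$. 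For \eqref{a3-pe} the key is the decay $2^{-jM}$: since $\varphi$ vanishes near $0$ one has $\varphi(\zeta)=\sum_{|\gamma|=M}\zeta^{\gamma}\psi_\gamma(\zeta)$ with $\psi_\gamma\in C^\infty_0$, hence $a_j=\varphi(2^{-j}D_x)a=2^{-jM}\sum_{|\gamma|=M}\psi_\gamma(2^{-j}D_x)D^\gamma_x a$ with $D^\gamma_x a\in S^{d+M}_{1,1}$ and $p_{\alpha,\beta}(D^\gamma_x a)=p_{\alpha,\beta+\gamma}(a)$; thus $a_j$ has $S^{d+M}_{1,1}$-seminorms $\le c_M2^{-jM}p(a)$, and the displayed bound with $d'=d+M$, $\rho=R2^{k}$ gives $F_{a_j}(N,R2^{k};x)\le c_M2^{-jM}p(a)(R2^{k})^{d+M}$. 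Splitting $u^{j-h}=\sum_{k=0}^{j-h}u_k$, estimating termwise, and enlarging $\sum_{k=0}^{j-h}$ to $\sum_{k=0}^{j}$ then yields \eqref{a3-pe}; together with the coronas \eqref{supp1-eq}--\eqref{supp2-eq} and the polynomial bound \eqref{u*PWS-eq} for $u_k^{*}$, these estimates also furnish the $\cal D'$-convergence of the three series on $D(a(x,D))$.

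I expect the crux to be the third term: producing the gain $2^{-jM}$ at the cost of raising the order to $d+M$ is exactly what defuses the classical type $1,1$ unboundedness (cf.\ Example~\ref{Ching-exmp}), and one must keep the seminorm $p(a)$ genuinely uniform in $j$ and $k$ across the three reductions — all the rescaled operators $\psi(2^{h-k}D_x)$ and $\psi_\gamma(2^{-j}D_x)$ must be handled as $L_1$-bounded, resp.\ fixed $C^\infty_0$, Fourier multipliers. The surrounding spectral bookkeeping, above all the strict positivity $R_h=\tfrac{r}{2}-R2^{-h}>0$ secured by the choice $2R<r2^{h}$, is routine but must be carried through consistently.
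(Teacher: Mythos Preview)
Your proposal is correct and follows essentially the same approach that the paper invokes: the paper does not prove Theorem~\ref{a123-thm} in situ but refers to \cite[Thm.~5.1]{JJ11pe} and Remark~\ref{Fa-rem}, and your argument is precisely a self-contained version of that route---the factorisation inequality \eqref{Fau*-eq} plus Theorem~\ref{Fa-thm} for each summand, with the $2^{-jM}$ gain in \eqref{a3-pe} extracted from the vanishing of $\varphi$ at the origin (your decomposition $\varphi(\zeta)=\sum_{|\gamma|=M}\zeta^\gamma\psi_\gamma(\zeta)$ is exactly how one proves the statement $F_{a_Q}=\cal O(Q^{-M}R^{d+M})$ of Remark~\ref{Fa-rem}). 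Two small points worth making explicit: your bound $F_b\le c\,p(b)\rho^{d'}$ relies on choosing the auxiliary $\psi'$ supported in a fixed corona (possible since $\supp\cal Fu_k$ lies in $\{\tfrac{r}{2R}\rho\le|\eta|\le\rho\}$ for $k\ge1$, with $k=0$ absorbed into constants), and your final remark on $\cal D'$-convergence should be read as covering the $a^{(1)}$- and $a^{(3)}$-series unconditionally via the corona conditions, while convergence of the $a^{(2)}$-series is precisely what characterises $D(a(x,D))$ (cf.\ Remark~\ref{conv-rem}).
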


It is well known that in \eqref{a3-pe} one may treat 
the sum over $k$ by the elementary inequality 
\begin{equation}
  \sum_{j=0}^\infty 2^{sjq}
   (\sum_{k=0}^j |b_k|)^q\le c \sum_{j=0}^\infty 2^{sjq}|b_j|^q,   
\label{Yama-ineq}
\end{equation}
valid for all 
$b_j\in \C$ and $0<q\le \infty $ provided $s<0$; cf \cite{Y1}.

\begin{rem}
  \label{conv-rem}  
There is the addendum that the series 
\eqref{a1-eq}, \eqref{a3-eq} always converge for $u\in\cal S'$; 
so that $u$ is in $D(a(x,D))$
if and only if the $a^{(2)}$-series converges. Cf.\ \cite[Thm.~6.3]{JJ10tmp}.
\end{rem}

\begin{rem} \label{atdc-rem}
  If $a(x,\eta)$ satisfies the twisted diagonal condition
\eqref{tdc-cnd} for some $B\ge 1$,  
the supports in \eqref{supp2-eq} are for large $k$ both contained in the corona
\begin{equation}
  \bigl\{\,\xi \bigm| \frac{r}{2^{h+1}B} 2^k \le |\xi|\le 2R2^k\,\bigr\}.
  \label{supp2'-eq}
\end{equation}
Indeed, \eqref{Sigma-eq} yields that
$\supp\cal Fa_k(x,D)(u^{k-1}-u^{k-h})$ is contained in
\begin{equation}
  \bigl\{\,\xi+\eta \bigm| (\xi,\eta)\in \supp(\varphi_k\otimes 1)
\hat a,\ r2^{k-h}\le |\eta|\le R2^{k-1}\,\bigr\}.
\end{equation}
Therefore any $\zeta=\xi+\eta$ in the support fulfils $|\zeta|\le
R2^k+R2^{k-1}=(3R/2)2^k$.
But \eqref{tdc-cnd} implies that $B(1+|\xi+\eta|)\ge |\eta|$ on
$\supp\cal F_{x\to\xi}a$ so that, for all $k\ge h+1+\log_2(B/r)$,
\begin{equation}
  |\zeta|\ge \tfrac{1}{B}|\eta|-1\ge \tfrac{1}{B}r2^{k-h}-1\ge 
  (\tfrac{r}{2^hB}-2^{-k})2^k\ge \tfrac{r}{2^{h+1}B}2^k.
\end{equation}
The term $(a^k-a^{k-h})(x,D)u_k$ is analogous, but causes
$3R/2$ to be replaced by $2R$.
\end{rem}

\section{$L_p$-estimates}
  \label{Lp-sect}
\subsection{Function spaces}
To proceed from $H^s$-results, it would of course be natural to consider 
Sobolev spaces $H^s_p$ and H{\"o}lder--Zygmund spaces $C^s_*$
(cf.\ \cite[Def.~8.6.4]{H97}), but these are
special cases of the Besov spaces $B^{s}_{p,q}$ and Lizorkin--Triebel
spaces $F^{s}_{p,q}$. In fact,
\begin{gather}
 H^s_p=F^s_{p,2} \quad\text{for}\quad 1<p<\infty,   \label{HF-eq}
   \\
 C_*^s=B^s_{\infty ,\infty }\quad\text{for}\quad s\in\R.   \label{CB-eq}
\end{gather}
Because of the Littlewood--Paley analysis that will follow, it
requires almost no extra effort in the estimates to
cover the full $B^s_{p,q}$ and $F^s_{p,q}$ scales.

To invoke the $B^{s}_{p,q}$ and $F^{s}_{p,q}$ scales is natural in the
context, for it was shown in \cite{JJ04DCR,JJ05DTL} that every type
$1,1$-operator $a(x,D)$ of order $d\in\R$ is a bounded map 
\begin{equation}
 a(x,D)\colon F^d_{p,1}(\Rn)\to L_p(\Rn) \quad\text{for $1\le p<\infty$}.   
  \label{Fdp1-eq}
\end{equation}
Because $B^d_{p,1}\subset F^d_{p,1}$ is a strict inclusion for $p>1$, 
this sharpened the borderline analysis of Bourdaud
\cite{Bou88}; \eqref{Fdp1-eq} was moreover proved to be optimal 
within the $B^{s}_{p,q}$- and $F^{s}_{p,q}$-scales.

To recall the definition of $B^{s}_{p,q}$ and $F^{s}_{p,q}$, 
let a Littlewood--Paley partition of unity $1=\sum_{j=0}^\infty
\Phi_j$ be chosen as in Section~\ref{corona-sect} 
with $\Phi_j=\Phi(2^{-j}\cdot  )$ for
$\Phi=\Psi-\Psi(2\cdot )$, though $\Phi_0=\Psi$, 
whereby $\Psi\in C^\infty_0(\Rn)$ equal to $1$ around the origin is fixed. 
Usually it has been required that $\supp\Phi$ should be contained in the 
corona with $\tfrac{1}{2}\le |\xi|\le 2$; 
but this restriction is avoided here in order that $\Psi$ can be taken
equal to an arbitrary modulation function entering $a(x,D)$.
That this is possible can be seen by adopting the approach in eg
\cite{Y1,JoSi08}:

When $\Psi$ is fixed as above, then 
the spaces are defined for $s\in \R$ and $p,q\in
\,]0,\infty ]$ as follows, when
$\nrm{\cdot }{p}$ denotes the
(quasi-)norm of the Lebesgue space 
$L_p(\Rn)$ for $0<p\le \infty $ and $\nrm{\cdot }{\ell_q}$
stands for that of the sequence space $\ell_q(\N_0)$,
\begin{align}
 B^{s}_{p,q}(\Rn)&=\bigl\{\,u\in\cal S'(\Rn)\bigm|
 \Nrm{ \{2^{sj} \nrm{\Phi_j(D)u(\cdot)}{p} \}_{j=0}
 ^\infty}{\ell_q} <\infty\,\bigr\},
 \label{bspq-eq} 
    \\
 F^{s}_{p,q}(\Rn)&=\bigl\{\,u\in\cal S'(\Rn)\bigm|
 \Nrm{ \nrm{ \{2^{sj}\Phi_j(D) u\}_{j=0}
  ^\infty}{\ell_q} (\cdot)}{p} <\infty\,\bigr\}\,.
 \label{fspq-eq}
\end{align}
Throughout it will be understood that $p<\infty$ when
Lizorkin--Triebel spaces $F^{s}_{p,q}$ are considered. 

In the definition the finite expressions are norms for $p,q\ge 1$
(quasi-norms if $p<1$ or $q<1$).
In general $u\mapsto \nrm{u}{}^{\lambda}$ is subadditive for 
$\lambda\le\min(1,p,q)$, so $\nrm{f-g}{}^\lambda$ is a metric.

This implies continuous embeddings 
$\cal S\hookrightarrow B^{s}_{p,q}\hookrightarrow \cal S'$
and 
$\cal S\hookrightarrow F^{s}_{p,q}\hookrightarrow \cal S'$ in the usual way,
thence completeness (cf \cite{JoSi07,T2}). There are simple embeddings 
$F^{s}_{p,q}\hookrightarrow F^{s'}_{p,r}$ for $s'<s$ and arbitrary $q$,$r$,
or for $s'=s$ when $r\ge q$. Similarly for $B^{s}_{p,q}$.

\begin{exmp}
In the $F^{s}_{p,q}$-scale,  $f(t)=\sum_{j=0}^\infty 2^{-jd}e^{\im 2^jt}$
belongs locally to
$F^d_{p,\infty }(\R)$; cf \cite[Rem.~3.7]{JJ08vfm}.
This is for $0<d\le 1$ a variant of
Weierstrass' nowhere differentiable function.

Homogeneous distributions were characterised in the $B^{s}_{p,q}$-scale 
in Prop.~2.8 of \cite{JJ08par}:
when $u\in \cal D'(\Rn)$ is $C^\infty$ on $\Rn\setminus\{0\}$ and homogeneous
of degree $a\in \C$ there (cf \cite[Def~3.2.2]{H}),
then (at $x=0$) $u$ is locally  in 
$B^{\fracci np+\Re a}_{p,\infty}(\Rn)$ for $0<p\le\infty$. 
If $-n<\Re a<0$ and $p\in\,]-\tfrac{n}{\Re a},\infty]$ then $u\in B^{\fracci np+\Re
a}_{p,\infty}(\Rn)$; this holds also for $p=\infty$ if $\Re a=0$.
These conclusions are optimal for
$s$ and $q$, unless $u$ is a homogenenous polynomial 
(the only case in which $u\in  C^\infty(\Rn)$).
Eg $\delta_0\in B^{\fracci np}_{p,\infty} $ while a quotient of two homogeneous
polynomials of the same degree, say $P(x)/Q(x)$ is locally in
$B^{\fracci np}_{p,\infty }$ for $0<p\le \infty $.
\end{exmp}

Invoking a multiplier result, one finds a dyadic
ball and corona criterion:

\begin{lem}
  \label{Fspq-lem}
Let $s>\max(0,\fracc np-n)$ for $0<p<\infty$ and $0< q\le \infty$ and suppose
$u_j\in \cal S'(\Rn)$ fulfil that, for some $A>0$,
\begin{equation}
  \supp\cal F u_j\subset B(0,A2^j),\qquad
  F(q):=
  \Nrm{(\sum_{j=0}^\infty 2^{sjq}|u_j(\cdot)|^q)^{\fracci1q}}{p}<\infty.
  \label{ballFq-eq}
\end{equation}
Then $\sum_{j=0}^\infty u_j$ converges in $\cal S'(\Rn)$ to some $u\in
F^s_{p,r}(\Rn)$ for $ r\ge q$, $ r>\tfrac{n}{n+s}$,
and $\nrm{u}{F^s_{p,r}}\le cF(r)$ for some $c>0$ depending on
$n$, $s$, $p$ and $r$.

When moreover $\supp\cal Fu_j\subset \{\,\xi\mid A^{-1}2^j\le |\xi|\le
A2^j\,\}$ for all $j\ge J$, for some $J\ge 1$, 
then the conclusions are valid for all $s\in \R$ and $r=q$.
\end{lem}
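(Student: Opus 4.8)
The plan is to reduce the statement to a classical dyadic maximal-function estimate of Fefferman--Stein / Peetre type. First I would handle the ball case. Since $\supp\cal Fu_j\subset B(0,A2^j)$, each $u_j$ is entire of exponential type, and for any $N\geq 0$ one has the Peetre maximal inequality $u_j^*(N,A2^j;x)\leq c\,u_j(x)$ controlled in $L_p$-sense, i.e.\ the vector-valued maximal operator $\{u_j\}\mapsto\{M u_j\}$ (Hardy--Littlewood, in the Fefferman--Stein form on $\ell_q(L_p)$) is bounded once $N>n/p$; here $r>n/(n+s)$ is used to absorb the slack coming from $s$. Concretely, I would write $u=\sum_j u_j$, apply $\Phi_k(D)$, and use the spectral support $\supp\cal Fu_j\subset B(0,A2^j)$ to see that $\Phi_k(D)u_j=0$ unless $2^j\gtrsim 2^k$, so that
\begin{equation}
  2^{sk}|\Phi_k(D)u(x)|\leq c\sum_{j\geq k-K}2^{s(k-j)}2^{sj}|\Phi_k(D)u_j(x)|
\end{equation}
for a fixed $K$ depending on $A$. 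Since $s>\max(0,n/p-n)\geq 0$ is \emph{not} enough by itself when $k$ is bounded below, one splits: for the ``low'' part $2^j\lesssim 1$ the sum is finite and harmless, and for the ``high'' part the geometric factor $2^{s(k-j)}$ with $s>0$ together with the pointwise bound $|\Phi_k(D)u_j(x)|\leq c\,u_j^*(N,A2^j;x)\leq c\,M u_j(x)$ lets me apply \eqref{Yama-ineq}-type summation in the $\ell_q$-quasinorm and then the Fefferman--Stein maximal inequality in $L_p(\ell_q)$, valid because $p<\infty$ and $N$ (hence the maximal function exponent) can be chosen $>n/p$. This yields $\nrm{u}{F^s_{p,q}}\leq cF(q)$, and the passage from $q$ to $r\geq q$ with $r>n/(n+s)$ is the standard embedding together with the observation that enlarging $r$ only weakens the $\ell_r$-quasinorm; convergence of $\sum u_j$ in $\cal S'$ follows from the uniform polynomial bounds on $u_j$ implied by \eqref{ballFq-eq} (or is cited from \cite{Y1,JoSi08}).

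For the corona case, the extra hypothesis $A^{-1}2^j\leq|\xi|\leq A2^j$ on $\supp\cal Fu_j$ for $j\geq J$ means that $\Phi_k(D)u_j\neq 0$ forces $|k-j|\leq K'$ for a fixed $K'$; hence the inner sum over $j$ for each fixed $k$ has a bounded number of terms, and no geometric decay is needed. This removes the constraint $s>0$ entirely and one gets the bound for all $s\in\R$, with $r=q$, by the same Fefferman--Stein argument applied term by term. The finitely many low indices $j<J$ contribute a Schwartz (band-limited, $L_p$) function whose $F^s_{p,q}$-norm is trivially controlled.

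The main obstacle is obtaining the pointwise passage from $|\Phi_k(D)u_j(x)|$ to a Hardy--Littlewood maximal function of $u_j$ with an exponent $N>n/p$ while keeping the $\ell_q$-structure intact: this is exactly where the Peetre maximal function and the condition $r>n/(n+s)$ (equivalently, the admissible range of $N$) enter, and it is the step that forces $p<\infty$ in the $F$-case. I expect to invoke the maximal-function machinery of \cite{Y1} (or the review in Section~\ref{pe-sect}), so in the write-up this reduces to a citation plus the bookkeeping of the two index regimes above; the genuinely new content is merely that the partition of unity is allowed to come from an arbitrary modulation function $\Psi$, which changes nothing since only the dyadic localisation of $\supp\Phi$ and $\supp\Phi_0$ is used.
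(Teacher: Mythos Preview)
Your overall plan---apply $\Phi_k(D)$ to $\sum u_j$, use the spectral supports to cut the $j$-sum to $j\ge k-K$ (ball case) or $|j-k|\le K'$ (corona case), bound each $\Phi_k(D)u_j$ pointwise by a maximal function of $u_j$, and close with Fefferman--Stein---coincides with the paper's route: the lemma is not proved independently there but is referred to \cite{JoSi08} and to the proof of Proposition~\ref{Fcor-prop}, whose case $\theta=1$ is exactly the corona statement. Your corona argument is sound.

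The ball case, however, has a genuine gap. The pointwise step you invoke, $|\Phi_k(D)u_j(x)|\le c\,u_j^*(N,A2^j;x)$, is \emph{not} uniform in $j-k\ge0$: writing $\Phi_k(D)u_j=2^{kn}\check\Phi(2^k\cdot)*u_j$ and extracting $u_j^*$ leaves the factor $\int|\check\Phi(z)|(1+A2^{j-k}|z|)^N\,dz$, which is of order $2^{(j-k)N}$. (Equivalently, in the factorisation inequality \eqref{Fau*-eq} the symbol factor $F_{\Phi_k}$ blows up for $j\gg k$.) The alternative bound $|\Phi_k(D)u_j|\le cMu_j$ \emph{is} uniform, but $M=M_1$ then forces $p,q>1$ in Fefferman--Stein and misses the stated range $0<p<\infty$, $0<q\le\infty$. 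What one actually needs---and what the paper uses in the proof of Proposition~\ref{Fcor-prop}---is Marschall's inequality (Remark~\ref{Marschall-rem}), which for $0<t\le1$ gives
\[
|\Phi_k(D)u_j(x)|\le c\,2^{(j-k)(\frac nt-n)}M_tu_j(x).
\]
It is this penalty $2^{(j-k)(n/t-n)}$, not merely the sign of $s$, that the factor $2^{s(k-j)}$ must beat; hence one needs $s>\tfrac nt-n$ together with $t<\min(p,r)$ for the vector-valued maximal inequality. The existence of such a $t$ is precisely what $s>\max(0,\fracnp-n)$ and $r>\tfrac n{n+s}$ encode. So your reading that $r>n/(n+s)$ governs ``the admissible range of $N$'' mislocates the mechanism: it governs the admissible range of $t$ in $M_t$, balancing the multiplier penalty against the maximal inequality.
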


This is an isotropic version of \cite[Lem.~3.19-20]{JoSi08}, where the proof 
is applicable for arbitrary Littlewood--Paley partitions, though with 
other constants if $\Psi$ is such
that $R>2$. Alternatively the reader may refer to the below
Proposition~\ref{Fcor-prop}, where the proof also covers the sufficiency of 
\eqref{ballFq-eq} and as a special case gives the last part of
Lemma~\ref{Fspq-lem} as well.

From Lemma~\ref{Fspq-lem} it follows that $F^{s}_{p,q}$ is 
independent of the particular Littlewood--Paley
decomposition, and that different choices lead to equivalent quasi-norms.

The functions $u_k=\Phi(2^{-k}D)u$ will play a central role below
because their maximal functions $u_k^*$, cf Section~\ref{pe-sect}, are
controlled in terms of the Lizorkin--Triebel norm $\nrm{u}{F^{s}_{p,q}}$ as follows: 
for $0<t<\infty $ there is an
estimate, cf \cite[Thm.~2.10]{Y1}, in terms of the modified Hardy-Littlewood
maximal function given by
$M_t u_k(x)=\sup_{r>0} (r^{-n}\int_{|x-y|\le r}|u(y)|^t\,dy)^{1/t}$,
\begin{equation}
  u_k^*(N,R2^k;x)\le   u_k^*(\tfrac{n}{t},R2^k;x)\le c M_tu_k(x),
 \quad  N\ge n/t.
  \label{Yam-ineq}
\end{equation}
So for $t<\min(p,q)$ the Fefferman-Stein inequality (cf
\cite[Thm.~2.2]{Y1}) yields a basic inequality  valid for the 
$u_k^*=u_k^*(N,R2^k,\cdot )$ and any $s\in \R$, 
\begin{equation}
  \int_{\Rn} \nrm{2^{sk}u_k^*(\cdot )}{\ell_{q}}^p\,dx
  \le 
  c\int_{\Rn} \nrm{2^{sk}M_t u_k(\cdot )}{\ell_{q}}^p\,dx
  \le  c'
  \int_{\Rn} \nrm{2^{sk}u_k(\cdot )}{\ell_{q}}^p\,dx
  =c'\nrm{ u}{F^{s}_{p,q}}^p.
  \label{u*M-eq}
\end{equation}

As general references to the theory of these function spaces, the reader is
referred to the books \cite{RuSi96,T2,T3}; the paper 
\cite{Y1} gives a concise (anisotropic) presentation.

\begin{rem}
  \label{Marschall-rem}
As an alternative to the techniques in Section~\ref{pe-sect}, there is 
an estimate for symbols $b(x,\eta)$ in $L_{1,\loc}(\R^{2n})\cap
\cal S'(\R^{2n})$ with support in $\Rn\times \overline{B}(0,2^k)$ and $\supp\cal F
u\subset \overline{B}(0,2^k)$, $k\in\N$:
\begin{equation}
  |b(x,D)v(x)|\le c\Nrm{b(x,2^k\cdot )}{\dot B^{n/t}_{1,t}} M_t u(x),
  \qquad 0<t\le 1.
  \label{Marschall-ineq}
\end{equation}
This is Marschall's inequality, it goes back to \cite[p.37]{Mar85} and 
was exploited in eg \cite{Mar91}; in the above form it was proved in
\cite{JJ05DTL} under the condition that the right-hand side is in
$L_{1,\loc}(\Rn)$ (cf also \cite{JoSi08}).
While $M_tu$ is as in \eqref{Yam-ineq}, the norm 
$\Nrm{b(x,2^k\cdot )}{\dot B^{n/t}_{1,t}}$
of the symbol in the homogenenous Besov space is of special interest here.
It is defined in terms of a partition of unity $1=\sum_{j=-\infty }^\infty
\Phi(2^{-j}\eta)$, with $\Phi$ as in \eqref{bspq-eq}, and \eqref{bspq-eq}
read with $\ell_q$ over $\Z$ gives the norm. This yields the
well-known dyadic scaling property that
\begin{equation}
  \Nrm{b(x,2^k\cdot )}{\dot B^{n/t}_{1,t}}
  = 2^{k(\frac nt-n)}\Nrm{b(x,\cdot)}{\dot B^{n/t}_{1,t}}.  
\end{equation}
\end{rem}

\subsection{Basic estimates in $L_p$}
For general type $1,1$-operators $a(x,D)$ one has the next result. 
This appeared 
in \cite[Cor.~6.2]{JJ05DTL}, albeit with a rather sketchy
explanation. Therefore a full proof is given here, now explicitly based on
Definition~\ref{a11-defn} and the pointwise techniques in Section~\ref{pe-sect}:

\begin{thm}
  \label{FBspq-thm}
Every $a(x,D)$ in $\OP(S^d_{1,1}(\Rn\times\Rn))$ is continuous,
for all $s>\max(0,\fracc np-n)$, $0<p,q\le\infty$, 
\begin{align}
   a(x,D)&\colon F^{s+d}_{p,q}(\Rn)\to F^s_{p,r}(\Rn),
   \quad p<\infty,\ r\ge q,\ r>n/(n+s),
  \label{Fspr-eq} \\
   a(x,D)&\colon B^{s+d}_{p,q}(\Rn)\to B^s_{p,q}(\Rn).
  \label{Bspq-eq}
\end{align}
Here the twisted diagonal condition \eqref{tdc-cnd} implies 
\eqref{Fspr-eq} and \eqref{Bspq-eq} for all $s\in \R$ and $r=q$.
\end{thm}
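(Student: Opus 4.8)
The plan is to combine the paradifferential decomposition from Theorem~\ref{a123-thm} with the dyadic ball and corona criterion (Lemma~\ref{Fspq-lem}) and the maximal-function estimate \eqref{u*M-eq}. First I would fix a modulation function $\psi$ and write $a_\psi(x,D)u = a^{(1)}_\psi(x,D)u + a^{(2)}_\psi(x,D)u + a^{(3)}_\psi(x,D)u$ as in \eqref{a123-eq}, using the convention that the Littlewood--Paley decomposition $1=\Psi+\sum\Phi_j$ defining $F^{s+d}_{p,q}$ and $B^{s+d}_{p,q}$ is the one generated by $\psi$ itself (so $u_k=\Phi_k(D)u=\varphi(2^{-k}D)u$). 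The strategy is: (i) show each of the three series converges in $\cal S'$ — for $a^{(1)}$ and $a^{(3)}$ this is the addendum in Remark~\ref{conv-rem}, and for $a^{(2)}$ it will follow a posteriori from the norm estimate once we know $u\in F^{s+d}_{p,q}$ with $s>0$; (ii) estimate the $F^s_{p,r}$- resp.\ $B^s_{p,q}$-norm of each piece by the input norm.

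The core of (ii) is three applications of Lemma~\ref{Fspq-lem} (and its obvious Besov analogue). For $a^{(1)}_\psi(x,D)u = \sum_{k\ge h} a^{k-h}(x,D)u_k$ the summands have the \emph{corona} spectral property \eqref{supp1-eq}, so the second part of Lemma~\ref{Fspq-lem} applies for \emph{all} $s\in\R$; the pointwise bound \eqref{a1-pe} gives $|a^{k-h}(x,D)u_k(x)|\le p(a)(R2^k)^d u_k^*(N,R2^k;x)$, hence $2^{sk}|a^{k-h}(x,D)u_k(x)|\le c\, p(a)\,2^{(s+d)k}u_k^*$, and \eqref{u*M-eq} bounds the resulting $\ell_q(L_p)$ (resp.\ $L_p(\ell_q)$) norm by $c\,p(a)\nrm{u}{F^{s+d}_{p,q}}$ (resp.\ Besov). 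The term $a^{(2)}_\psi(x,D)u$, written in the brief form \eqref{a2'-eq}, has only the \emph{ball} property \eqref{supp2-eq}, so here we genuinely need $s>\max(0,\fracc np-n)$ and $r>n/(n+s)$; the pointwise estimates \eqref{a2-pe} and \eqref{a2'-pe} again reduce everything, after pulling the finite sum $\sum_{l=1}^{h-1}2^{-ld}$ out and reindexing, to $\sum_k 2^{(s+d)k}u_k^*$ controlled by \eqref{u*M-eq}. For $a^{(3)}_\psi(x,D)u=\sum_{j\ge h}a_j(x,D)u^{j-h}$ we use \eqref{a3-pe}: choosing $M\in\N$ with $d+M>0$ wait — rather, we need $M$ large enough that the geometric factor $2^{-jM}$ beats the growth; after applying \eqref{Yama-ineq} with the exponent $s+d-M<0$ (choose $M>s+d$, also $M>(\fracc np-n)_+ - s$ if needed so the ball criterion's lower bound on $s$ is respected for the collapsed sum indexed by $j$), the corona property \eqref{supp3-eq} lets us invoke the second part of Lemma~\ref{Fspq-lem} and then \eqref{u*M-eq}. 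Summing the three contributions and noting independence of $\psi$ (all estimates being uniform in $\psi$, and the limit coinciding for every modulation function on the dense domain, which here contains all of $F^{s+d}_{p,q}$) gives $u\in D(a(x,D))$ and the stated boundedness.

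For the final sentence — the sharpening under the twisted diagonal condition \eqref{tdc-cnd} — the only term whose estimate used $s>0$ was $a^{(2)}$, via its mere ball property. But Remark~\ref{atdc-rem} shows that \eqref{tdc-cnd} upgrades \eqref{supp2-eq} to the \emph{corona} inclusion \eqref{supp2'-eq} for all large $k$ (the finitely many small-$k$ terms are harmless, being a fixed $\cal S$-valued function times the input, or simply absorbed). Hence the second part of Lemma~\ref{Fspq-lem} applies to $a^{(2)}$ as well, removing the restriction on $s$ and allowing $r=q$; combined with the already-unrestricted treatment of $a^{(1)}$ and $a^{(3)}$, this yields \eqref{Fspr-eq}--\eqref{Bspq-eq} for all $s\in\R$ with $r=q$.

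The main obstacle I anticipate is bookkeeping rather than conceptual: getting the index shifts in $a^{(2)}$ right (the off-diagonal terms $a_k(x,D)u_{k-l}$ and $(a^k-a^{k-h})(x,D)u_k$ must be matched to $u_{k-l}^*$ with the correct dyadic weight $2^{-ld}$, and one must be careful that the sum $\sum_{l=1}^{h-1}$ is finite and $h$ is fixed), and similarly handling the nested sum in \eqref{a3-pe} so that \eqref{Yama-ineq} applies with a strictly negative exponent after the factor $2^{-jM}$ is distributed. A secondary subtlety is that the Littlewood--Paley corona here has radius $R$ possibly larger than $2$, so one should cite the version of Lemma~\ref{Fspq-lem} valid for arbitrary partitions (as the excerpt notes, \cite{JoSi08} covers this, with only the constants changing). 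None of this is deep, but it must be done carefully to keep the constants uniform in $\psi$.
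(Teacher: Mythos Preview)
Your proposal is correct and follows essentially the same route as the paper's proof: paradifferential splitting via Theorem~\ref{a123-thm}, pointwise bounds \eqref{a1-pe}--\eqref{a3-pe} fed into the corona/ball criterion of Lemma~\ref{Fspq-lem} together with the maximal inequality \eqref{u*M-eq}, and Remark~\ref{atdc-rem} to upgrade $a^{(2)}$ under \eqref{tdc-cnd}. Two small corrections to your bookkeeping: for $a^{(3)}$ the condition needed in \eqref{Yama-ineq} is $M>s$ (not $M>s+d$), and $\psi$-independence does not require uniformity of constants in $\psi$ --- it follows because each bounded $a_\psi(x,D)$ agrees with $a(x,D)$ on the dense subspace $\cal S$ (with the embedding $F^{s}_{p,\infty}\hookrightarrow F^{s'}_{p,1}$ handling $q=\infty$).
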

\begin{proof}
Let $\psi$ denote an arbitrary modulation function, and recall the notation
from Section~\ref{corona-sect}, in particular \eqref{a123-eq}
and $R$, $r$ and $h$. It is exploited below that $\nrm{u}{F^{s}_{p,q}}$ can
be calculated in terms of the Littlewood--Paley partition associated with
$\psi$. 

For $a^{(1)}(x,D)u=\sum_{k=h}^\infty a^{k-h}(x,D)u_k$ and $u\in F^{s}_{p,q}$,
application of the norms of $\ell_q$ and $L_p$ to the pointwise estimate in \eqref{a1-pe}
gives (if $q<\infty $ for simplicity's sake)
\begin{equation}
 \int_{\Rn}(\sum_{k=0}^\infty 2^{skq}
|a^{k-h}(x,D)u_k(x)|^q)^{\frac{p}{q}}\,dx 
\le 
  c_2p(a)^p\Nrm{(\sum_{k=0}^\infty 2^{(s+d)kq}u_k^*(x)^q)^\frac1q}{p}^p.
  \label{a1Lplq-eq}
\end{equation}
Taking $N>n/\min(p,q)$ in $u^*_k$, it is seen from \eqref{u*M-eq} that one has the bound in 
Lemma~\ref{Fspq-lem}  for all $s\in \R$, whilst the corona condition there holds
by Theorem~\ref{a123-thm}, so the lemma gives
\begin{equation}
  \nrm{a^{(1)}(x,D)u}{F^{s}_{p,q}}\le c 
  (\int_{\Rn}(\sum_{k=0}^\infty 2^{skq}
|a^{k-h}(x,D)u_k(x)|^q)^{\frac{p}{q}}\,dx)^{\fracpi}
   \le c'\nrm{u}{F^{s+d}_{p,q}}.
  \label{a1Fspq-eq}
\end{equation}

In the contribution $a^{(3)}(x,D)u=\sum_{j=h}^\infty a_j(x,D)u^{j-h}$
one may apply \eqref{Yama-ineq}. For $M>s$ this gives
\begin{equation}
  \begin{split}
 \sum_{j=0}^\infty 2^{sjq}   |a_j(x,D)u^{j-h}(x)|^q
&\le \sum_{j=0}^\infty 2^{(s-M)jq} (\sum_{k=0}^{j} 
      c_Mp(a)(R2^k)^{d+M} u_k^*(N,R2^k;x))^q
\\
&\le  cp(a)^q \sum_{j=0}^\infty 2^{(s+d)jq} u_j^*(N,R2^j;x)^q.
  \end{split}
\end{equation}
Proceeding by integration one arrives at
\begin{equation}
 (\int_{\Rn}(\sum_{j=0}^\infty 2^{sjq}
|a_j(x,D)u^{j-h}(x)|^q)^{\frac{p}{q}}\,dx )^{\fracpi}
\le 
  c_3p(a)\Nrm{(\sum_{j=0}^\infty 2^{(s+d)jq}u_j^*(x)^q)^\frac1q}{p}.
  \label{a3Lplq-eq}
\end{equation}
Hence the same application of Lemma~\ref{Fspq-lem} as for \eqref{a1Fspq-eq} 
now gives $\nrm{a^{(3)}(x,D)u}{F^{s}_{p,q}}\le c\nrm{u}{F^{s+d}_{p,q}}$.

In estimates of $a^{(2)}(x,D)u$ the terms can be treated similarly, now departing from 
\eqref{a2-pe} and \eqref{a2'-pe}. Thus one finds
\begin{multline}
  \big( \int_{\Rn}(\sum_{k=0}^\infty 2^{skq}
|(a^k-a^{k-h})(x,D)u_k(x)+a_k(x,D)(u^{k-1}-u^{k-h})|^q)^{\frac{p}{q}}\,dx 
\big)^{\fracpi}
  \\
\le 
  c'_2p(a)\Nrm{(\sum_{k=0}^\infty 2^{(s+d)kq}u_k^*(x)^q)^\frac1q}{p}.
  \label{a2Lplq-eq}
\end{multline}
In case \eqref{tdc-cnd} holds, Remark~\ref{atdc-rem}
shows that Lemma~\ref{Fspq-lem} is applicable once more, so the argument for \eqref{a1Fspq-eq} 
gives $\nrm{a^{(2)}(x,D)u}{F^{s}_{p,q}}\le c\nrm{u}{F^{s+d}_{p,q}}$.
So for all $s\in \R$,
\begin{equation}
  \nrm{a_{\psi}(x,D)u}{F^{s}_{p,q}}
\le  \sum_{j=1,2,3} \nrm{a^{(j)}(x,D)u}{F^{s}_{p,q}}
\le c p(a)\nrm{u}{F^{s+d}_{p,q}}.
  \label{apsi123-eq}
\end{equation}
Without \eqref{tdc-cnd} the spectra are by \eqref{supp2-eq} just
contained in balls, but the condition $s>\max(0,\fracnp-n)$ and those on 
$r$ imply that 
$\nrm{a^{(2)}(x,D)u}{F^{s}_{p,r}}\le c \nrm{u}{F^{s+d}_{p,q}}$;
cf Lemma~\ref{Fspq-lem}.
This gives \eqref{apsi123-eq} with $q$ replaced by $r$ on 
the left-hand side.

Thus $a_{\psi}(x,D)\colon F^{s+d}_{p,q}\to F^{s}_{p,r}$ is continuous and
coincides with $a(x,D)$ on $\cal S$. 
Since $\cal S$ is dense in $F^{s}_{p,q}$ for $q<\infty $ (and
$F^{s}_{p,\infty }\hookrightarrow F^{s'}_{p,1}$ for $s'<s$), there is no
dependence on $\psi$. Hence $u\in D(a(x,D))$ and \eqref{apsi123-eq}
holds for $a(x,D)u$. This proves \eqref{Fspr-eq}
in all cases.

The Besov case is analogous; one can interchange the order of $L_p$ and
$\ell_q$ and refer to the maximal inequality for scalar functions:
Lemma~\ref{Fspq-lem} carries over to $B^{s}_{p,q}$ in a natural way for
$0<p\le \infty $ with $r=q$ in all cases; this is well known, cf
\cite{Y1,JJ05DTL,JoSi08}. 
\end{proof}

One may also obtain \eqref{Bspq-eq}
by real interpolation of \eqref{Fspr-eq}, cf \cite[2.4.2]{T2},
when $0<p<\infty $.

The borderline analysis in \eqref{Fdp1-eq} is a little simpler than the
above, as completeness of $L_p$ may replace the use of Lemma~\ref{Fspq-lem}. 
In fact, the proof in \cite{JJ04DCR,JJ05DTL} applies to 
Definition~\ref{a11-defn} with the addendum that the right-hand side of
\eqref{a123-eq} does not depend on $\psi$ for $u\in F^d_{p,1}$,
because $\cal S$ is dense there. 

By duality, Theorem~\ref{FBspq-thm} extends to
operators that merely fulfil the twisted diagonal condition of arbitrary
real order.

\begin{thm}
  \label{FB8-thm}
Let $a(x,\eta)$ belong to the self-adjoint subclass
$\tilde S^d_{1,1}(\Rn\times\Rn)$, characterised in 
Theorem~\ref{a*-thm}. Then $a(x,D)$ 
is a bounded map for all $s\in\R$,
\begin{align}
   a(x,D)&\colon F^{s+d}_{p,q}(\Rn)\to F^{s}_{p,q}(\Rn),
   \quad 1<p<\infty,\ 1<q\le \infty, 
  \label{F8'-eq} \\
   a(x,D)&\colon B^{s+d}_{p,q}(\Rn)\to B^{s}_{p,q}(\Rn),
   \quad 1<p\le \infty,\ 1<q\le \infty.
  \label{B8'-eq}
\end{align}
\end{thm}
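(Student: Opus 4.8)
The plan is to derive both mapping properties from the positive‑smoothness statement in Theorem~\ref{FBspq-thm} by combining a \emph{lifting} step with a \emph{duality} step, using that the subclass $\tilde S^d_{1,1}$ is tailored to behave well under composition with Fourier multipliers and under adjunction.

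First I would reduce to negative order. Writing $\langle\eta\rangle=(1+|\eta|^2)^{1/2}$, the Bessel potential $\Lambda^t:=\OP(\langle\eta\rangle^t)$ is for every $t\in\R$ a Fourier multiplier in $S^t_{1,0}$ and an isomorphism of $F^\sigma_{p,q}(\Rn)$ onto $F^{\sigma-t}_{p,q}(\Rn)$, and of $B^\sigma_{p,q}(\Rn)$ onto $B^{\sigma-t}_{p,q}(\Rn)$, for all admissible $p,q$ and all $\sigma\in\R$. Fixing $t>d$ and putting $c(x,\eta):=a(x,\eta)\langle\eta\rangle^{-t}$, Proposition~\ref{abc-prop} gives $c\in S^{d-t}_{1,1}$ with $d-t<0$ and $a(x,D)=c(x,D)\Lambda^t$ on all of $\cal S'$; moreover $c\in\tilde S^{d-t}_{1,1}$, since $c_{\chi,\varepsilon}=a_{\chi,\varepsilon}\langle\eta\rangle^{-t}$ has the same dependence on $x$ as $a_{\chi,\varepsilon}$ while the extra $x$‑independent factor only improves the $\eta$‑decay, so the characterisation \eqref{sigma-cnd} of Theorem~\ref{a*-thm} passes from $a$ to $c$. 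Since $\Lambda^t$ carries $F^{s+d}_{p,q}$ isomorphically onto $F^{s+d-t}_{p,q}$ (and $B^{s+d}_{p,q}$ onto $B^{s+d-t}_{p,q}$), it suffices to prove the theorem with $c$ in place of $a$, so I may assume $d<0$ henceforth.

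For $s>0$ the result is then immediate from Theorem~\ref{FBspq-thm}: since $p\ge1$ one has $s>\max(0,\fracnp-n)$, and taking $r=q$ (allowed as $q>1>n/(n+s)$, also when $q=\infty$) gives $a(x,D)\colon F^{s+d}_{p,q}\to F^s_{p,q}$, while its Besov part gives $a(x,D)\colon B^{s+d}_{p,q}\to B^s_{p,q}$. For $s\le0$ I would argue by duality. Let $1/p+1/p'=1$; since $q>1$, set $q^\flat:=q'$ if $q<\infty$ and $q^\flat:=1$ if $q=\infty$, so $q^\flat\in[1,\infty)$, and note that, $p'$ being finite in the Lizorkin--Triebel case (with $p'=1$ allowed in the Besov case), $\cal S$ is dense in $F^\mu_{p',q^\flat}$ and $(F^\mu_{p',q^\flat})^*=F^{-\mu}_{p,q}$ with the pairing extending that of $\cal S'\times\cal S$, and likewise $(B^\mu_{p',q^\flat})^*=B^{-\mu}_{p,q}$. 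Given $s\le0$, put $\sigma:=-s-d>0$ (positive because $d<0$). The adjoint symbol $a^*=e^{\im D_x\cdot D_\eta}\bar a$ lies in $\tilde S^d_{1,1}\subset S^d_{1,1}$ as well (stability under adjunction, cf.\ the remark after Theorem~\ref{a*-thm}), so by Theorem~\ref{tildeS-thm} $\OP(a^*)$ is everywhere defined and continuous on $\cal S'$ and $a(x,D)=\OP(a^*)^*$. Applying the positive‑smoothness case to $\OP(a^*)$ over the exponents $p',q^\flat$ shows $\OP(a^*)\colon F^{\sigma+d}_{p',q^\flat}\to F^\sigma_{p',q^\flat}$ (resp.\ $B^{\sigma+d}_{p',q^\flat}\to B^\sigma_{p',q^\flat}$) is bounded; dualising this operator, using the identifications above together with the identity $\langle a(x,D)u,g\rangle=\langle u,\OP(a^*)g\rangle$ for \emph{all} $u\in\cal S'$ and $g\in\cal S$ (this being the content of $a(x,D)=\OP(a^*)^*$ as maps $\cal S'\to\cal S'$), one identifies the Banach adjoint with $a(x,D)$ and obtains $a(x,D)\colon F^{s+d}_{p,q}\to F^s_{p,q}$ (resp.\ $B^{s+d}_{p,q}\to B^s_{p,q}$), since $-\sigma=s+d$ and $-\sigma-d=s$.

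Since the ranges $s>0$ and $s\le0$ exhaust $\R$, this settles the reduced case $d<0$ and, via the lifting, the theorem. The step that needs care is the duality: it goes through precisely because $\tilde S^d_{1,1}$ is engineered so that $a(x,D)$ and $\OP(a^*)$ are genuine continuous operators on all of $\cal S'$ with $a(x,D)=\OP(a^*)^*$ — whence there is no domain ambiguity and no density argument in the variable $u$ is needed — and because the endpoints $q=\infty$, and $p=\infty$ for Besov spaces, are handled by realising the affected spaces as duals, $F^s_{p,\infty}=(F^{-s}_{p',1})^*$ and $B^s_{\infty,q}=(B^{-s}_{1,q^\flat})^*$, which keeps the secondary index finite so that $\cal S$ remains dense in the predual.
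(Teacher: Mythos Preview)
Your proof is correct and follows essentially the same approach as the paper's: both combine Theorem~\ref{FBspq-thm} for positive smoothness, duality via Theorem~\ref{tildeS-thm}, and the lifting $a(x,D)=c(x,D)\Lambda^t$ via Proposition~\ref{abc-prop}. The only difference is the order: you reduce to $d<0$ by lifting at the outset so that duality immediately covers all $s\le 0$, whereas the paper first runs duality for $s<-d$ and only afterwards uses the lifting with $t=d+1$ to close the remaining gap $s\in[-d,0]$ when $d\ge 0$; you are also slightly more explicit than the paper in checking that the lifted symbol $c=a\langle\eta\rangle^{-t}$ stays in $\tilde S^{d-t}_{1,1}$.
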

\begin{proof}
When $p'+p=p'p$ and $q'+q=q'q$, then $F^{s}_{p,q}$ is the dual of 
$F^{-s}_{p',q'}$ since $1<p'<\infty $ and $1\le q'<\infty $; 
cf \cite[2.11]{T2}, the case $q'=1$ is covered by eg \cite[Rem.~5.14]{FJ2}. 
The adjoint symbol $a^*(x,\eta)$ is in $S^{d}_{1,1}$ by assumption, 
and $p'\ge 1$ and $q'\ge 1$, so Theorem~\ref{FBspq-thm} gives that
\begin{equation}
  a^*(x,D)\colon F^{-s}_{p',q'}(\Rn)\to F^{-s-d}_{p',q'}(\Rn)  
\end{equation}
is continuous whenever $-s-d>\max(0,\fracc n{p'}-n)=0$, ie
for $s<-d$. The adjoint 
$a^*(x,D)^*$ is therefore bounded $F^{s+d}_{p,q}\to F^{s}_{p,q}$, and it
is a restriction of $a(x,D)$ in view of Theorem~\ref{tildeS-thm}.
When $s>0$ then \eqref{F8'-eq} also holds by Theorem~\ref{FBspq-thm}.

If $d\ge 0$ the gap with $s\in [-d,0]$ 
can be closed since $a(x,D)=b(x,D)\Lambda^t$ by Proposition~\ref{abc-prop} 
holds with $\Lambda^t=\OP((1+|\eta|^2)^{t/2})$, $t\in \R$ and 
$b(x,\eta)=a(x,\eta)(1+|\eta|^2)^{-t/2}$. The latter is of type $1,1$ and order
$-1$ for $t=d+1$, which by the just shown gives \eqref{F8'-eq} for all $s$.

For the $B^{s}_{p,q}$ scale similar arguments apply, also for $p=\infty $.
\end{proof}

Obviously Theorem~\ref{FB8-thm} gives a natural generalisation of 
H{\"o}rmander's boundedness result mentioned after \eqref{Hssigma-eq}
to the $L_p$-setting.
Specialisation of Theorems~\ref{FBspq-thm}--\ref{FB8-thm}
to Sobolev and H{\"o}lder--Zygmund spaces, cf \eqref{HF-eq}--\eqref{CB-eq}, gives
\begin{cor}
  \label{HC-cor}
Every
$a(x,D)\in \OP(S^d_{1,1}(\Rn\times\Rn))$ is bounded 
\begin{align}
   a(x,D)&\colon H^{s+d}_{p}(\Rn)\to H^{s}_{p}(\Rn),
   \quad s>0,\ 1<p<\infty,
  \label{Hsp-eq} \\
   a(x,D)&\colon C_*^{s+d}(\Rn)\to C_*^{s}(\Rn),
   \quad s>0.
  \label{Cs-eq}
\end{align}
This is valid for all real $s$ whenever $a(x,\eta)$ belongs to the
self-adjoint subclass $\tilde S^d_{1,1}(\Rn\times \Rn)$. 
\end{cor}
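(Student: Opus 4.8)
The plan is to derive Corollary~\ref{HC-cor} directly from Theorems~\ref{FBspq-thm} and~\ref{FB8-thm} by invoking the coincidences $H^s_p=F^s_{p,2}$ for $1<p<\infty$ and $C^s_*=B^s_{\infty,\infty}$ for all $s\in\R$ recorded in \eqref{HF-eq}--\eqref{CB-eq}. Thus the only thing to check is that the special parameter values $q=r=2$ with $1<p<\infty$, and $p=q=\infty$, lie within the admissible ranges of the two quoted theorems.

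First I would treat an arbitrary $a(x,D)\in\OP(S^d_{1,1}(\Rn\times\Rn))$. For \eqref{Hsp-eq} I apply \eqref{Fspr-eq} with $q=r=2$: since $1<p<\infty$ forces $\fracc np-n<0$, the hypothesis $s>\max(0,\fracc np-n)$ collapses to $s>0$, while the remaining requirements $r\ge q$ and $r>n/(n+s)$ hold trivially for $r=2$ and $s>0$. This gives boundedness $F^{s+d}_{p,2}\to F^{s}_{p,2}$, which by \eqref{HF-eq} is precisely \eqref{Hsp-eq}. For \eqref{Cs-eq} I instead use the Besov conclusion \eqref{Bspq-eq} at $p=q=\infty$; this is legitimate because \eqref{Bspq-eq} is stated for $0<p\le\infty$ with no finiteness of $p$ imposed, and with $p=\infty$ the threshold $\max(0,\fracc n\infty-n)=\max(0,-n)=0$ is again just $s>0$, so $B^{s+d}_{\infty,\infty}\to B^{s}_{\infty,\infty}$, which by \eqref{CB-eq} is \eqref{Cs-eq}.

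For $a(x,\eta)\in\tilde S^d_{1,1}(\Rn\times\Rn)$ I would instead quote Theorem~\ref{FB8-thm}: \eqref{F8'-eq} with $q=2$ (admissible, since $1<2\le\infty$ and $1<p<\infty$) gives the Sobolev statement for every $s\in\R$, and \eqref{B8'-eq} with $p=q=\infty$ (admissible, since the range there is $1<p\le\infty$, $1<q\le\infty$) gives the H\"older--Zygmund statement for every $s\in\R$; combining with \eqref{HF-eq}--\eqref{CB-eq} finishes the proof. I expect no genuine obstacle here, as the content is entirely in the two cited theorems. The single point deserving a moment's attention is the endpoint $p=\infty$ required for $C^s_*$: one must use that the \emph{Besov} parts of both Theorems~\ref{FBspq-thm} and~\ref{FB8-thm} are explicitly valid at $p=\infty$, and that the borderline index $q=2$ (resp.\ $q=\infty$) lies inside the permitted range, so that the specialisations \eqref{HF-eq}--\eqref{CB-eq} apply without loss.
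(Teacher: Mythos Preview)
Your proposal is correct and coincides with the paper's own approach: the corollary is stated there as a direct specialisation of Theorems~\ref{FBspq-thm} and~\ref{FB8-thm} via the identifications \eqref{HF-eq}--\eqref{CB-eq}, with no separate proof given. Your verification that the parameter choices $p\in(1,\infty)$, $q=r=2$ for $H^s_p$ and $p=q=\infty$ for $C^s_*$ fall within the admissible ranges of those theorems is exactly what is needed.
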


Previously \emph{extensions} with similar properties were obtained 
for $H^s_p$ by Meyer \cite{Mey81} and for $C^s_*$ by Stein (published in \cite{Ste93}).
By constrast, the corollary is valid for the operators in Definition~\ref{a11-defn}.

\subsection{Direct estimates for the self-adjoint subclass}
  \label{sigmaLp-ssect}
To complement Theorem~\ref{FB8-thm} with similar
results valid for $p$, $q$ in $\,]0,1]$ one can exploit 
the paradifferential decomposition \eqref{a123-eq} and the pointwise
estimates used above. 

However, in the results below there will be an
arbitrarily small loss of smoothness.
The reason is that the estimates of
$a^{(2)}_{\psi}(x,D)$ are based on a corona condition which is
\emph{non-symmetric} in the sense that the outer radii grow faster than the
inner ones.
That is, the last part of Lemma~\ref{Fspq-lem} will now be extended to
series $\sum u_j$ fulfilling 
the more general condition, where $0<\theta\le 1$ and $A>1$,
\begin{equation}
\begin{aligned}
  \supp \cal F u_j&\subset \{\,\xi\mid  |\xi|\le A 2^{j} \,\}
 \quad\text{for all $j\ge 0$},
\\
    \supp \cal F u_j&\subset \{\,\xi\mid  \tfrac{1}{A}2^{\theta j}
   \le |\xi|\le A 2^{j} \,\}
 \quad\text{for $j\ge J\ge 1$}.
\end{aligned}  
  \label{theta01-eq}
\end{equation}
This situation is probably known to experts in function spaces, 
but in lack of a reference it is analysed here. The techniques should be
standard, so the explanations will be brief.

The main point of \eqref{theta01-eq} is that $\sum u_j$
still converges for $s\le 0$, albeit with a loss of smoothness; 
cf the cases below with $s'<s$.  
Actually the loss is proportional to $(1-\theta)/\theta$, 
hence tends to $\infty $ for $\theta\to0$, which reflects that convergence 
in some cases fails for $\theta=0$ 
(take $\hat u_j=\tfrac{1}{j}\psi\in C^\infty_0$, $s=0$, $1<q\le \infty $).

\begin{prop}
  \label{Fcor-prop}
Let $s\in \R$, $0< p<\infty$, $0<q\le\infty$, $J\in\N$ 
and $0<\theta\le 1$ be given; with $q>n/(n+s)$ if $s>0$.
For each sequence
$(u_j)_{j\in \N_0}$ in $\cal S'(\Rn)$ fulfilling the corona
condition \eqref{theta01-eq}
together with the bound (usual modification for $q=\infty $)
\begin{equation}
  F:=
  \Nrm{(\sum_{j=0}^\infty |2^{sj}u_j(\cdot )|^q)^{\frac1q}}
       {L_p}<\infty,
  \label{F-id}
\end{equation}
the series $\sum_{j=0}^\infty u_j$ converges in $\cal S'(\Rn)$ to some
$u\in F^{s'}_{p,q}(\Rn)$ with
\begin{equation}
  \Nrm{u}{F^{s'}_{p,q}}\le cF,
\end{equation}
whereby the constant $c$ also depends on $s'$, which one can take as $s'=s$
for $\theta=1$, or in case $0<\theta<1$, take to fulfil
\begin{align}
  s'&=s \quad\text{for}\quad s>\max(0,\fracnp-n),   
  \label{s's-eq}
\\
  s'&<s/\theta \quad\text{for}\quad s\le 0,\; p\ge 1,\; q\ge 1,
  \label{s's-ineq}\\[-1\jot]
\intertext{or in general}
  s'&<s-\smash[t]{\tfrac{1-\theta}\theta}(\max(0,\fracnp-n)-s)_{+}.
  \label{gs's-ineq}
\end{align}
(Here $s'=s$ is possible by \eqref{s's-eq} if the positive part 
$(\dots )_+$ has strictly negative argument.)

The conclusions carry over to $B^{s'}_{p,q}$ for any $q\in ]0,\infty ]$ when
$B:=(\sum_{j=0}^\infty 2^{sjq}\nrm{u_j}{p}^{q})^{\fracci 1q}<\infty $.
\end{prop}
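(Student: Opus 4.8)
The proof is a refinement of the standard ball/corona argument for Lizorkin--Triebel spaces, so I would first treat the clean cases and then isolate the new phenomenon caused by the non-symmetric corona. Write $u^{(N)}=\sum_{j=0}^{N}u_{j}$; the goal is to show this is Cauchy in $\cal S'$ with a uniform $F^{s'}_{p,q}$-bound. For $\theta=1$ the second line of \eqref{theta01-eq} is an honest dyadic corona, and Lemma~\ref{Fspq-lem} (applied to $(u_{j})_{j\ge J}$, the finitely many $u_{j}$ with $j<J$ being harmless) gives $s'=s$ directly; likewise \eqref{s's-eq} is exactly the ball-case of Lemma~\ref{Fspq-lem}, so nothing new is needed there. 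Convergence in $\cal S'$ in all cases follows from \eqref{F-id}: each $u_{j}$ has spectrum in $\overline B(0,A2^{j})$, so by the Nikol'ski{\u\i}--Plancherel--P\'olya inequality $\nrm{u_{j}}{\infty}\le cA^{n/p}2^{jn/p}\nrm{u_{j}}{p}\le c'2^{j(n/p-s)}F$, whence $\sum\dual{u_{j}}{\varphi}$ converges absolutely for $\varphi\in\cal S$ once we multiply and divide by a sufficiently high power of $(1+|\cdot|)$; this also pins down the limit $u$ independently of anything below.

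\textbf{The core estimate.} The real work is \eqref{s's-ineq} and \eqref{gs's-ineq}, where $s\le 0$. The idea is that although $u_{j}$ may have spectrum down to $|\xi|\sim 2^{\theta j}$, it does not have spectrum \emph{below} that, so when one computes $\Phi_{k}(D)u=\sum_{j}\Phi_{k}(D)u_{j}$ the indices $j$ contributing to frequency band $k$ satisfy $2^{\theta j}\lesssim 2^{k}\lesssim 2^{j}$, i.e.\ $k\le j+O(1)$ \emph{and} $j\le k/\theta+O(1)$. Thus $\Phi_{k}(D)u=\sum_{j=\rho k - c}^{\infty}\Phi_{k}(D)u_{j}$ with $\rho=1$ replaced effectively by the constraint $j\gtrsim\theta^{-1}\!\cdot$(nothing) on the low side but $j\gtrsim k$ on the high side is the wrong way round — let me state it correctly: the contributing $j$ run over $k-c\le j$ (from the outer radius $A2^{j}\gtrsim 2^{k}$) together with $j\le\theta^{-1}k+c$ (from the inner radius $A^{-1}2^{\theta j}\le 2^{k}$, valid for $j\ge J$). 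So
\begin{equation}
  \Phi_{k}(D)u=\sum_{\,k-c\le j\le \theta^{-1}k+c}\Phi_{k}(D)u_{j}+(\text{finitely many low terms}).
\end{equation}
Now apply $2^{s'k}$, the $\ell_{q}$-norm in $k$, a maximal-function bound $|\Phi_{k}(D)u_{j}(x)|\le c\,M_{t}u_{j}(x)$ (valid with $t<\min(p,q)$ and $t>n/(n+s')$ if $s'>0$, by the same Fefferman--Stein/Peetre machinery as in \eqref{Yam-ineq}--\eqref{u*M-eq}), and then the $L_{p}$-norm in $x$; after the Fefferman--Stein vector-valued maximal inequality it remains to bound
\begin{equation}
  \Nrm{\bigl(\sum_{k}2^{s'kq}\bigl(\sum_{k-c\le j\le\theta^{-1}k+c}|u_{j}(\cdot)|\bigr)^{q}\bigr)^{1/q}}{L_{p}}
  \;\le\; c\,\Nrm{\bigl(\sum_{j}2^{sjq}|u_{j}(\cdot)|^{q}\bigr)^{1/q}}{L_{p}}=cF.
\end{equation}
This is a purely sequential inequality: writing $b_{j}=2^{sj}|u_{j}(x)|\ge 0$ one needs, for each fixed $x$,
\begin{equation}
  \sum_{k}2^{(s'-s\theta)kq}\Bigl(\sum_{\theta k\le j\le k}2^{s(j-\theta k)}\,2^{-sj}\!\cdot\!2^{sj}|u_{j}|\Bigr)^{q}\lesssim\sum_{j}b_{j}^{q},
\end{equation}
which follows from a weighted Young / Hardy-type inequality in the two regimes: when $s\le 0$ and $s'<s/\theta$ the outer weight $2^{(s'-s\theta)kq}$ decays (here I use $s'\theta<s$, i.e.\ $s'<s/\theta$, precisely the hypothesis \eqref{s's-ineq}), and the inner sum over the dyadic band $j\in[\theta k,k]$ is handled by \eqref{Yama-ineq} after the substitution making it a geometric convolution. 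For $p,q\ge 1$ this is just Minkowski/Young; for general $p,q\in(0,\infty]$ one pays the extra $(n/p-n)_{+}$ and the exponent degrades by the factor $(1-\theta)/\theta$ exactly as in \eqref{gs's-ineq}, because the embedding $F^{s}_{p,q}\hookrightarrow\ell^{q}(2^{s\cdot}L_{p})$-type loss on a band of length $\sim(1-\theta)k$ costs $(1-\theta)k\cdot(n/p-n)_{+}$ in smoothness, distributed per unit of $k$ as $(1-\theta)/\theta$.

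\textbf{Besov case and main obstacle.} For $B^{s'}_{p,q}$ everything is easier: one interchanges $L_{p}$ and $\ell_{q}$ at the outset, so the vector-valued maximal inequality is replaced by the scalar Hardy--Littlewood inequality $\nrm{M_{t}u_{j}}{p}\le c\nrm{u_{j}}{p}$ (needing only $t<p$, no constraint tying $p$ to $q$), valid for all $p\in(0,\infty]$; the sequential inequality above is then applied with the $L_{p}$-norm already taken, and the same three regimes \eqref{s's-eq}--\eqref{gs's-ineq} come out with $r=q$. I expect the main obstacle to be \emph{bookkeeping the loss of smoothness sharply}, i.e.\ getting the precise exponent $(1-\theta)/\theta\cdot(\max(0,n/p-n)-s)_{+}$ in \eqref{gs's-ineq} rather than some cruder bound, and correctly matching it against the borderline case $s'=s$ permitted by \eqref{s's-eq} when the positive part vanishes; this forces a careful split of the $j$-sum at the threshold $|\xi|\sim 2^{k}$ and a separate treatment of the finitely many indices $j<J$ (which contribute only to bounded frequency bands $k$ and are absorbed into $F$ trivially). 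Everything else — the $\cal S'$-convergence, the Fefferman--Stein step, the independence of the limit from the Littlewood--Paley decomposition — is routine and parallels the proof of Lemma~\ref{Fspq-lem}.
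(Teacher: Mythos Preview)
Your overall plan matches the paper's: the index range $k-c\le j\le k/\theta+c$ for the terms contributing to $\Phi_{k}(D)u$, the reduction of $\theta=1$ and of the case $s>\max(0,\fracnp-n)$ to Lemma~\ref{Fspq-lem}, and the passage to a pointwise sequential inequality followed by the Fefferman--Stein step are exactly the paper's route. (Your $\cal S'$-convergence via the Nikol'ski{\u\i} inequality is a legitimate alternative to the paper's argument, which instead shows the partial sums are Cauchy in $F^{s'}_{p,q}$ once the norm estimate is in hand; the latter avoids having to control the polynomial growth of the $u_{j}$ uniformly in $j$.)

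The real gap lies in the general case \eqref{gs's-ineq}. Your asserted uniform bound $|\Phi_{k}(D)u_{j}(x)|\le c\,M_{t}u_{j}(x)$ holds for $t=1$---so the sketch for \eqref{s's-ineq} with $p,q>1$ is on the right track---but it \emph{fails} for $t<1$, which is forced whenever $\min(p,q)\le 1$. Since $u_{j}$ has spectral radius $\sim 2^{j}$ while the multiplier $\Phi_{k}$ lives at the smaller scale $2^{k}\le 2^{j}$, the sharp pointwise bound (obtained in the paper from Marschall's inequality, Remark~\ref{Marschall-rem}, via the homogeneous-Besov scaling $\Nrm{\Phi(2^{j-k}\cdot)}{\dot B^{n/t}_{1,t}}=2^{(j-k)(n/t-n)}\Nrm{\Phi}{\dot B^{n/t}_{1,t}}$) reads
\[
  |\Phi_{k}(D)u_{j}(x)|\le c\,2^{(j-k)(n/t-n)}M_{t}u_{j}(x),\qquad j\ge k.
\]
Carrying this growing factor through the sum-swap (over the band $k\le j\le k/\theta$, not $\theta k\le j\le k$ as written in your second display) turns the effective exponent on the sequence side into $\theta s'+(1-\theta)(\tfrac nt-n)$ rather than your $\theta s'$; matching this against $s$ and optimising $t$ is precisely what produces the loss $\tfrac{1-\theta}{\theta}\max(0,\fracnp-n)$ in \eqref{gs's-ineq}. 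Your explanation via an ``$F^{s}_{p,q}\hookrightarrow\ell^{q}(2^{s\cdot}L_{p})$-type embedding loss'' misidentifies the mechanism---no such embedding is invoked, the loss is already present in the pointwise multiplier estimate---and without tracking that factor your sequential inequality does not close when $\min(p,q)\le 1$.
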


\begin{rem}   \label{r-rem}
The above restriction $q> n/(n+s)$ for $s>0$ is not
severe, for if \eqref{F-id} holds for a sum-exponent in $\,]0,n/(n+s)]$,
then the constant $F$ is also finite for any $q>n/(n+s)$, which yields 
the convergence and an estimate in a slightly larger space; cf the $r$ in 
Lemma~\ref{Fspq-lem}
\end{rem}

\begin{proof} Increasing $A\ge 1$, as we may, gives a reduction to
the case $J=1$: $u=\sum u_j$ has the contributions $0+\dots +0+u_J+u_{J+1}+\dots $
and $(u_0+\dots +u_{J-1})+0+\dots $, where
the former fulfils the conditions for $J=1$; the latter trivially
converges, it fulfils 
\eqref{theta01-eq} for $J=1$ if $A$ is replaced by $A2^J$ and
\eqref{F-id} as 
$\nrm{u_0+\dots +u_{J-1}}{p}\le c_p2^{|s|J}F<\infty $ with $c_p=J^{\max(1,1/p)}$.
Hence $\nrm{u}{F^{s'}_{p,q}}\le C(c+c_p2^{|s|J})F$ if $C$ is the constant
from the quasi-triangle inequality.

It is first assumed that $u=\sum u_k$ converges in 
$\cal S'$. Then each term $\Phi_j(D)\sum u_k$ in the expression for
$\nrm{u}{F^{s'}_{p,q}}$ is defined; cf \eqref{fspq-eq}. Writing now
$\Phi_j(\eta)$ as $\Phi(2^{-j}\eta)$ for clarity, one has
\begin{equation}
  \Phi(2^{-j}D)\sum_{k\ge 0} u_k=  \sum_{j-h\le k\le [j/\theta]+h} 
  \Phi(2^{-j}D)u_k.
  \label{Phiksum-eq}
\end{equation}
In fact, \eqref{theta01-eq}  gives an $h\in \N$ such that
$\Phi(2^{-j}D)\cal F u_k=0$ for all $k\notin[j-h,\frac{j}{\theta}+h]$.

To proceed it is convenient to use Marschall's inequality;
cf Remark~\ref{Marschall-rem}.  This gives
\begin{equation}
  |\Phi(2^{-j}D)u_k(x)|\le c
  \Nrm{\Phi(R2^{\nu-j}\cdot )}{\dot B^{\frac{n}{t}}_{1,t}}
  M_t u_k(x), \quad\text{for}\quad 0<t\le 1,
\end{equation}
whereby $\nu$ should be taken so large that $B(0,R2^\nu)$
contains the supports of $\Phi(2^{-j}\cdot )$ and $\hat u_k$; 
also $R\ge A$ can be arranged.
Note that by Remark~\ref{Marschall-rem},
\begin{equation}
  \Nrm{\Phi(R2^{\nu-j}\cdot )}{\dot B^{\frac{n}{t}}_{1,t}}
  =
  2^{(\nu-j)(\frac{n}{t}-n)}\Nrm{\Phi(R\cdot )}{\dot B^{\frac{n}{t}}_{1,t}}.
\end{equation}
This is applied in the following for some $t\in \,]0,1]$ 
that also fulfils $t<\min(p,q)$, and the main point is to show that,
with $s'$ as in the statement, it holds in all cases that
\begin{equation}
 \big (\sum_{j=0}^\infty 2^{s'jq}|\Phi(2^{-j}D)\sum_{k\ge 0}u_k(x)|^q
 \big)^{1/q}
\le 
  c \big (\sum_{k=0}^\infty 2^{skq} M_t u_k(x)^q
  \big)^{1/q}.
  \label{prel-ineq}
\end{equation}

The easiest case is for $0<q\le 1$.
As $\ell_q\hookrightarrow \ell_1$ for such $q$, one has
\begin{equation}
  \begin{split}
  \sum_{j=0}^\infty 2^{s'jq}|\Phi(2^{-j}D)\sum_{k\ge 0}u_k(x)|^q
&\le 
  \sum_{j=0}^\infty\sum_{j-h\le k\le j/\theta+h} 
    2^{s'jq}|\Phi(2^{-j}D)u_k(x)|^q
\\
&\le 
  c\sum_{k=0}^\infty\sum_{\theta k-h\le j\le k+h} 2^{s'jq}
    \Nrm{\Phi(R2^{\nu-j}\cdot )}{\dot B^{\frac{n}{t}}_{1,t}}^q 
    M_tu_k(x)^q. 
  \end{split}
  \label{2sum-ineq}
\end{equation}
Here $\nu=j$ gives a constant for $j\ge k$, so the above is both for 
$s'\gtreqless 0$ estimated by
\begin{equation}
  c\sum_{k=0}^\infty(
  h2^{s'kq}+ \sum_{\theta k-h\le j\le k} 2^{s'jq+(\frac{n}{t}-n)(k-j)q}
    )M_tu_k(x)^q.
\end{equation}
For $\theta=1$ the sum over $j$ has a fixed number of terms, hence is 
$\cal O(2^{skq})$ for $s'=s$; cf \eqref{prel-ineq}.

In the case in \eqref{s's-eq} one may as $q>n/(n+s)$ arrange that
$s'=s>\tfrac{n}{t}-n>\max(0,\fracc np-n,\fracc nq-n)$ by taking $t$
sufficiently close to $\min(p,q)$. 
Then the geometric series above is estimated by the last term, 
hence is $\cal O(2^{skq})$, as required in \eqref{prel-ineq}.

What remains of  \eqref{gs's-ineq} are the cases in which $s\le
\max(0,\fracnp-n)$, that is
\begin{equation}
  s'<s\le \max(0,\fracc np-n,\fracc nq-n)<\tfrac{n}{t}-n,
\qquad t\in \,]0,\min(p,q)[\,.
\end{equation}
By \eqref{gs's-ineq} a suitably small $t>0$ yields $s=\theta s'+(1-\theta)(\frac
nt-n)$, and since
$s'-(\frac{n}{t}-n)<0$ in the above sum an estimate by the first term gives 
$\cal O(2^{(s'\theta+(1-\theta)(\frac{n}{t}-n))kq})= \cal O(2^{skq})$.

For $1<q<\infty $ the inequality \eqref{prel-ineq} follows by use of H{\"o}lder's inequality
in \eqref{Phiksum-eq}, for if $q+q'=q'q$, one can for $s'<0$ use 
$2^{\theta s'(k-j)}$ as a summation factor to get
\begin{equation}
  |\Phi(2^{-j}D)\sum_{k\ge 0}u_k(x)|^q
\le 
  c\sum_{k=j-h}^{[j/\theta]+h}
  2^{(k-j)s'\theta q}\Nrm{\Phi(R2^{\nu-j}\cdot )}{\dot B^{\frac nt}_{1,t}}^q
  M_t u_k(x)^q
  (\frac{2^{-(\tfrac{1}{\theta}-1)js'\theta q'}}
        {2^{-s'\theta q'}-1})^{\tfrac{q}{q'}}.
\end{equation}
Therefore the above procedure yields an estimate of
$\sum_{j=0}^\infty 2^{s'jq}|\Phi(2^{-j}D)\sum_{k\ge 0}u_k(x)|^q$ by
\begin{equation}
  \sum_{k=0}^\infty 
   2^{ks'\theta q} M_t u_k(x)^q(h+
  \sum_{\theta k-h\le j< k} 
  2^{(k-j)(\frac nt-n)q})
\le 
  c \sum_{k=0}^\infty2^{(s'\theta+(1-\theta)(\tfrac{n}{t}-n))kq} 
     M_tu_k(x)^q,
\end{equation}
which again gives \eqref{prel-ineq}
by using \eqref{gs's-ineq} to arrange $s\ge s'\theta+(1-\theta)(\tfrac{n}{t}-n)$ for a $t\in\,]0,1[\,$. 
By making the last inequality strict for a slightly larger $t$, the
argument is seen to extend to cases with $0\le s'<s\le \max(0,\fracc np-n)$ 
by using $s'-(\frac nt-n)<0$ instead of $s'$ in H{\"o}lder's inequality.
In fact, one gets 
$\sum 2^{(s'\theta +(1-\theta)(\frac nt-n))kq}(h2^{h(\frac nt-n)}+(1+h+k(1-\theta)))M_tu_k(x)^q$, 
which again is $\cal O(2^{skq})$ as the term $k(1-\theta)$ is harmless 
by the choice of $t$ (or for $\theta=1$). Hence \eqref{prel-ineq} holds.

In case $s'=s>0$, cf \eqref{s's-eq}, one may take $s-\frac nt+n>0$ (as for
$q\le 1$) now with
$2^{(k-j)(s-\frac nt+n)/2}$ as a summation factor: then $(\dots)^{q/q'}=\cal O(1)$, 
so the factor in front of
$M_tu_k^q$ becomes
\begin{equation}
  \sum_{\theta k-h\le j\le k+h} 2^{sjq+(k-j)(s-\frac nt+n)q/2
         +(k-j)_+(\frac nt-n)q}
  =\cal O(2^{skq}).
\end{equation}
For $q=\infty $ a direct argument yields sup-norms weighted by
$2^{s'j}$ and $2^{sk}$ in \eqref{prel-ineq}.

By the choice of $t$, the Fefferman--Stein inequality applies to
\eqref{prel-ineq}, cf \eqref{u*M-eq}, whence
\begin{equation}
  (\int_{\Rn}(
  \sum_{j=0}^\infty 2^{s'jq}|\Phi_j(D)\sum_{k\ge 0}u_k(x)|^q)^{p/q}
  \,dx)^{1/p}
\le c
   (\int\Nrm{2^{sk}u_k(\cdot )}{\ell_q}^p\,dx)^{1/p}
=c F.
  \label{cF-ineq}
\end{equation}

Convergence is trivial for the partial sums $u^{(m)}=\sum_{j\le m}u_j$,
hence for $u^{(m+M)}-u^{(m)}$. 
So \eqref{cF-ineq} 
applies to $(0,\dots 0,u_{m+1},\dots ,u_{m+M},0,\dots )$,
which for $q<\infty $ by majorisation for $m\to\infty $ yields
\begin{equation}
  \Nrm{u^{(m+M)}-u^{(m)}}{F^{s'}_{p,q}}
\le c
   (\int_{\Rn}(\sum_{k=m}^\infty 2^{skq}|u_k(x)|^q)^{p/q}\,dx)^{1/p}
\searrow 0.
\end{equation}
As $F^{s'}_{p,q}$ is complete, $\sum u_j$ converges to an element $u(x)$ 
with norm 
$\le c F$ according to \eqref{cF-ineq}. For $q=\infty $ there is convergence
in the larger space $F^{s'-1/\theta}_{p,1}$ since the constant
$F$ remains finite if $s$,$\infty $ are replaced by $s-1$, $1$;
and again $\nrm{u}{F^{s'}_{p,q}}\le cF$ holds by \eqref{cF-ineq}.

For the Besov case the arguments are analogous. First of all the absolute
value should be replaced by the norm of $L_p$  in \eqref{2sum-ineq}, 
that now pertains to $0<q\le \min(1,p)$. H{\"o}lder's inequality applies in this case
if $1/q+1/q'=1/\min(1,p)$; and \eqref{cF-ineq} can be replaced by
boundedness of $M_t$ in $L_{p}$ for $t<p$. Convergence is
similarly shown. 
\end{proof}

Thus prepared, one arrives at a general result for $0<p\le 1$.

\begin{thm}
  \label{FB8'-thm}
If $a(x,\eta)$ belongs to the self-adjoint subclass
$\tilde S^d_{1,1}(\Rn\times\Rn)$, the operator $a(x,D)$ 
is bounded for $0<p\le 1$, $0<q\le\infty$, 
\begin{align}
   a(x,D)&\colon F^{s+d}_{p,q}(\Rn)\to F^{s'}_{p,q}(\Rn)
   \quad \text{for } s'<s\le\fracnp-n,
  \label{F8-eq} \\
   a(x,D)&\colon B^{s+d}_{p,q}(\Rn)\to B^{s'}_{p,q}(\Rn)
   \quad\text{for } s'<s\le \fracnp-n.
  \label{B8-eq}
\end{align}
\end{thm}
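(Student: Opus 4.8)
The plan is to reuse the paradifferential splitting \eqref{a123-eq} of $a_\psi(x,D)$ exactly as in the proof of Theorem~\ref{FBspq-thm}, and to show that each of the three pieces maps $F^{s+d}_{p,q}$ continuously into $F^{s'}_{p,q}$ (respectively $B^{s+d}_{p,q}$ into $B^{s'}_{p,q}$). For $a^{(1)}_\psi(x,D)$ and $a^{(3)}_\psi(x,D)$ nothing beyond Theorem~\ref{FBspq-thm} is needed: the corona spectra \eqref{supp1-eq}, \eqref{supp3-eq} and the pointwise bounds \eqref{a1-pe}, \eqref{a3-pe} hold for every $a\in S^d_{1,1}$, so the arguments leading to \eqref{a1Fspq-eq} and \eqref{a3Lplq-eq} --- using \eqref{Yama-ineq}, \eqref{u*M-eq} and the corona part of Lemma~\ref{Fspq-lem} (equivalently Proposition~\ref{Fcor-prop} with $\theta=1$) --- give continuity $F^{s+d}_{p,q}\to F^{s}_{p,q}$ for all $s\in\R$, with no loss of smoothness. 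Thus the whole problem is the term $a^{(2)}_\psi(x,D)$, whose summands in \eqref{a2'-eq} have only ball-shaped spectra \eqref{supp2-eq}; this is where the self-adjointness, i.e.\ the smallness near the twisted diagonal expressed by condition \eqref{orderN-cnd} of Theorem~\ref{a*-thm}, must enter.

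The idea is a $k$-dependent splitting of the symbol inside the $k$-th term of \eqref{a2'-eq}. Fix $\theta\in\,]0,1[\,$ close to $1$, set $\varepsilon_k=2^{-\smlnt{(1-\theta)k}}$, and with the localisation function $\chi$ of \eqref{chi1-eq}--\eqref{chi3-eq} write $a=b^{(k)}+a_{\chi,\varepsilon_k}$ with $b^{(k)}:=a-a_{\chi,\varepsilon_k}$. As in the computation around \eqref{a*tdc-eq}, $b^{(k)}$ fulfils the twisted diagonal condition \eqref{tdc-cnd} with $B=B_k:=4/\varepsilon_k$, while by \eqref{orderN-cnd} of Theorem~\ref{a*-thm} the $S^d_{1,1}$-seminorms of $a_{\chi,\varepsilon_k}$ are $\cal O(\varepsilon_k^N)$ for every $N$ --- hence those of $b^{(k)}$ are bounded uniformly in $k$. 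Splitting the $k$-th summand of \eqref{a2'-eq} along $a=b^{(k)}+a_{\chi,\varepsilon_k}$, I treat the two resulting series separately. For the $a_{\chi,\varepsilon_k}$-contribution the ball property \eqref{supp2-eq} remains and, by \eqref{a2-pe}--\eqref{a2'-pe}, each term picks up an extra factor $\cal O(\varepsilon_k^N)=\cal O(2^{-\smlnt{(1-\theta)k}N})$; taking $N$ large, this rapidly decaying ball-spectrum series is summed, using \eqref{u*M-eq} and the ball part of Lemma~\ref{Fspq-lem}, into some $F^{s''}_{p,r}$ with $s''>\max(0,\fracnp-n)\ge s>s'$, which embeds into $F^{s'}_{p,q}$ --- so it is harmless. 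For the $b^{(k)}$-contribution, applying the support computation of Remark~\ref{atdc-rem} to $b^{(k)}$ with $B=B_k$ in the $k$-th term shows that its spectrum lies in $\{\,\xi\mid \tfrac1A 2^{\theta k}\le|\xi|\le A2^k\,\}$ for all $k\ge J$, with $A$ and $J$ depending on $h,r,R,\theta$ --- i.e.\ precisely the non-symmetric corona condition \eqref{theta01-eq} --- and \eqref{a2-pe}--\eqref{a2'-pe} hold here with a $k$-uniform constant.

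With \eqref{theta01-eq} in hand I invoke Proposition~\ref{Fcor-prop}: since $F$ in \eqref{F-id} is dominated by $\Nrm{(\sum_k 2^{(s+d)kq}u_k^*(x)^q)^{1/q}}{L_p}\le c\nrm{u}{F^{s+d}_{p,q}}$ through \eqref{u*M-eq}, the $b^{(k)}$-series converges in $\cal S'$ to an element of $F^{\bar s}_{p,r}$ with norm $\le c\nrm{u}{F^{s+d}_{p,q}}$, where by \eqref{gs's-ineq} one may take $\bar s$ as any number $<s-\tfrac{1-\theta}\theta(\fracnp-n-s)$ (here $p\le1$ and $s\le\fracnp-n$ are used), and $r$ slightly above $q$ as permitted by Remark~\ref{r-rem} when $s>0$. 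Since $s\le\fracnp-n$, the bound on $\bar s$ tends to $s$ as $\theta\to1$; for the prescribed $s'<s$ one therefore first fixes $\theta$ so close to $1$ that $s-\tfrac{1-\theta}\theta(\fracnp-n-s)>s'$, then chooses $\bar s\in\,]s',s-\tfrac{1-\theta}\theta(\fracnp-n-s)[\,$, and finally uses $F^{\bar s}_{p,r}\hookrightarrow F^{s'}_{p,q}$, valid since $\bar s>s'$. Collecting the four contributions gives $a_\psi(x,D)\colon F^{s+d}_{p,q}\to F^{s'}_{p,q}$ boundedly; it agrees with $a(x,D)$ on $\cal S$, and as $\cal S$ is dense in $F^{s+d}_{p,q}$ for $q<\infty$ (for $q=\infty$ one passes through $F^{s'-1}_{p,1}$, as in Proposition~\ref{Fcor-prop}), the limit does not depend on $\psi$; hence $F^{s+d}_{p,q}\subset D(a(x,D))$ and \eqref{F8-eq} holds. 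The Besov statement \eqref{B8-eq} follows identically from the $B^{s'}_{p,q}$-versions of Lemma~\ref{Fspq-lem} and Proposition~\ref{Fcor-prop}.

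The main obstacle is the bookkeeping behind the corona condition \eqref{theta01-eq}: one must check that after the $k$-dependent split the twisted-diagonal radius $\tfrac{r}{2^{h+1}B_k}2^k$ is genuinely of order $2^{\theta k}$, uniformly for $k\ge J$, and that the error symbols $b^{(k)}$ retain $S^d_{1,1}$-seminorms bounded independently of $k$ --- both resting on the precise $\varepsilon$-power decay in \eqref{orderN-cnd} of Theorem~\ref{a*-thm}. The remaining manipulations (reindexing the double sums, handling the finitely many $k<J$ terms, and the $q=\infty$ modifications) are routine and parallel those already carried out for Theorem~\ref{FBspq-thm} and Proposition~\ref{Fcor-prop}.
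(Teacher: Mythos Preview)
Your proposal is correct and follows the same overall strategy as the paper: the paradifferential splitting \eqref{a123-eq}, the handling of $a^{(1)}$ and $a^{(3)}$ via the corona estimates of Theorem~\ref{FBspq-thm}, the $k$-dependent localisation $a=b^{(k)}+a_{\chi,\varepsilon_k}$ of the $a^{(2)}$-terms near the twisted diagonal, the rapid decay of the $a_{\chi,\varepsilon_k}$-piece from Theorem~\ref{a*-thm}, the non-symmetric corona for the remainder, and the final appeal to Proposition~\ref{Fcor-prop}. Your parameter $\theta$ corresponds to the paper's $1-\theta$, but this is purely notational.

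There are two minor tactical differences worth noting. First, the paper begins with an additional reduction $a=a_{\chi,1}+(a-a_{\chi,1})$, disposing of the second summand via the twisted-diagonal case of Theorem~\ref{FBspq-thm} and thereby assuming $\supp\hat a\subset\{\max(1,|\xi+\eta|)\le|\eta|\}$; this support hypothesis is then used explicitly in the paper's direct computation of the corona \eqref{bkcorona-eq} for the remainder $b_k$. You sidestep this reduction by observing that $b^{(k)}=a-a_{\chi,\varepsilon_k}$ satisfies the twisted diagonal condition \eqref{tdc-cnd} with $B_k=4/\varepsilon_k$ and then invoking Remark~\ref{atdc-rem} term by term --- a cleaner route to the same corona. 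Second, for the decay of the $a_{\chi,\varepsilon_k}$-piece the paper goes through condition~\eqref{sigma-cnd} of Theorem~\ref{a*-thm} together with the factorisation inequality and Theorem~\ref{Fa-thm} (plus a Minkowski argument to pass from $a_k$ to $a$), whereas you use condition~\eqref{orderN-cnd} directly, feeding the seminorm bound $p(a_{\chi,\varepsilon_k})=\cal O(\varepsilon_k^N)$ into the black-box estimates \eqref{a2-pe}--\eqref{a2'-pe}. Both routes yield the same $2^{-ckN}$ gain; yours is a bit more economical in that it recycles Theorem~\ref{a123-thm} wholesale.
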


\begin{proof}
Using \eqref{chi1-eq}--\eqref{chi3-eq}, the question is easily reduced to the
case of symbols for which 
\begin{equation}
 \hat a(\xi,\eta)\ne0  \implies  \max(1,|\xi+\eta|)\le |\eta|.
  \label{ahat-eq}
\end{equation}
In fact $a=a_{\chi,1}+(a-a_{\chi,1})$ where $a_{\chi,1}$ has the above
property, whilst Theorem~\ref{FBspq-thm} yields the boundedness for
$a-a_{\chi,1}$, as this is easily seen to fulfil the twisted diagonal
condition \eqref{tdc-cnd} for $B=1$. 
(Note that $a-a_{\chi,1}\in \tilde S^d_{1,1}$ is seen from Theorem~\ref{a*-thm},
as in \eqref{a*tdc-eq}, so that also $a_{\chi,1}\in\tilde S^d_{1,1}$.) 

First $a^{(1)}(x,D)u$ and $a^{(3)}(x,D)u$ are for all
$s\in \R$ covered by the proof of Theorem~\ref{FBspq-thm}; cf \eqref{apsi123-eq}. 
Thus it suffices to estimate the
$a^{(2)}$-series in \eqref{a2'-eq} for fixed $s'<s\le \fracnp-n$; 
a simple embedding of $F^{s'}_{p,q}$ gives a reduction to the case $q>n/(n+s)$ if
$s>0$; cf also Remark~\ref{r-rem}.

To fix notation, the splitting \eqref{a123-eq} is considered for some modulation
function $\Psi$ for which the associated Littlewood--Paley decomposition 
$1=\sum \Phi_j$ is used in the definition of the norms on $F^s_{p,q}$, as
described prior to \eqref{fspq-eq}.
Subjecting the second term in \eqref{a2'-eq} to H{\"o}rmander's localisation to a
neighbourhood of $\cal T$, 
cf \eqref{chi1-eq}--\eqref{chi3-eq}, one arrives at
\begin{equation}
  \hat a_{k,\chi,\varepsilon}(\xi,\eta)=
  \hat a(\xi,\eta)\Phi(2^{-k}\xi)\chi(\xi+\eta,\varepsilon\eta),
  \label{akke-id}
\end{equation}
This leaves the remainder
$b_k(x,\eta)=a_k(x,\eta)-a_{k,\chi,\varepsilon}(x,\eta)$,
that applied to the difference
$v_k=u^{k-1}-u^{k-h}
  =\cal F^{-1}((\Phi(2^{1-k}\cdot )-\Phi(2^{h-k}\cdot))\hat u)$
in \eqref{a2'-eq} gives
\begin{equation}
  a_k(x,D)v_k=a_{k,\chi,\varepsilon}(x,D)v_k+b_k(x,D)v_k .
\end{equation}

To utilise the pointwise estimates, take $\psi\in
C^\infty_0(\Rn)$ equal to $1$ around the corona given by 
$\tfrac{r}{R}2^{-1-h}\le |\eta|\le 1$ and supported 
where $\tfrac{r}{R}2^{-2-h}\le |\eta|\le 2$. 
Using $\psi(\eta/(R2^k))$ as the auxiliary function  
in the symbol factor,
the factorisation inequality \eqref{Fau*-eq} and Theorem~\ref{Fa-thm} give
\begin{equation}
  \begin{split}
  |a_{k,\chi,\varepsilon}(x,D)v_k(x)|&\le 
  F_{a_{k,\chi,\varepsilon}}(N,R2^k;x)v^*_k(N,R2^k;x)
\\
  &\le 
  cv^*_k(x)\sum_{|\alpha|=0}^{[N+n/2]+1}
  (\int_{r2^{k-h-2}\le |\eta|\le R 2^{k+1}} |(R2^k)^{|\alpha|-n/2}
   D^\alpha_\eta a_{k,\chi,\varepsilon}(x,\eta)|^2\,d\eta)^{1/2}.
  \end{split}
\end{equation}
Here the ratio of the limits is $2R/(r2^{-h-2})>32$, 
so the integration can be extended to $L\ge 6$ dyadic coronas,
with $|\eta|\in [R2^{k+1-L},R2^{k+1}]$.
This gives an estimate by $c(R2^k)^d L^{1/2}N_{\chi,\varepsilon,\alpha}(a_k)$. 
In addition, Minkowski's inequality gives
\begin{equation}
  N_{\chi,\varepsilon,\alpha}(a_k)\le 
  \sup_{\rho>0}\rho^{|\alpha|-d}\int_{\Rn} |2^{kn}\check \Phi(2^{k}y)|
   (\int_{\rho\le |\eta|\le 2\rho}    |D^\alpha_\eta 
   a_{\chi,\varepsilon}(x-y,\eta)|^2\,\frac{d\eta}{\rho^n})^{1/2}\,dy
 \le c
  N_{\chi,\varepsilon,\alpha}(a).
\end{equation} 
So  it follows from the above and \rom{(iii)} in Theorem~\ref{a*-thm} that
for all $\sigma>0$,
\begin{equation}
  |a_{k,\chi,\varepsilon}(x,D)v_k(x)|\le  cv^*_k(N,R2^k;x) 2^{k+d}
  \sum_{|\alpha|\le [N+n/2]+1}c_{\alpha,\sigma}
  \varepsilon^{\sigma+n/2-|\alpha|}.
   \label{akkev-eq}
\end{equation}

Now $\theta\in \,]0,1[\,$ is taken so small that 
$s'< s-\frac{\theta}{1-\theta}(\fracnp-n-s)$, which is the last condition in
Proposition~\ref{Fcor-prop} with $1-\theta$ instead of $\theta$.
Then $\varepsilon=2^{-k\theta}$ 
in \eqref{akkev-eq} clearly gives
\begin{equation}
  2^{k(s+M)}|a_{k,\chi,\varepsilon}(x,D)v_k(x)| 
\le cv_k^*(N,R2^k;x)2^{k(s+d)} 2^{-k\theta(\sigma-1-N-M/\theta)}.
  \label{akkev''-eq}
\end{equation}
Here one may first of all take $N>n/\min(p,q)$ so that \eqref{u*M-eq}
applies. Secondly, 
$\sigma$ can for any $M$ (with $\theta$ fixed as above) 
be chosen so that $2^{-k\theta(\sigma-1-N-M/\theta)}\le 1$.
This gives
\begin{equation}
  \begin{split}
  (\int\Nrm{2^{k(s+M)}a_{k,\chi,\varepsilon}(x,D)v_k(\cdot)}{\ell_q}^p
    \,dx)^{\fracpi}
  &\le c (\int\Nrm{2^{k(s+d)}v_k^*(N,R2^k;\cdot )}{\ell_q}^p\,dx)^{\fracpi}
\\
  &\le c' (\int\Nrm{2^{k(s+d)}v_k(\cdot )}{\ell_q}^p\,dx)^{\fracpi}
\le c''\Nrm{u}{F^{s+d}_{p,q}}.
  \label{akkeLp-eq}
  \end{split}
\end{equation}
Here the last inequality follows from the (quasi-)triangle inequality in
$\ell_q$ and $L_p$.

Since $a_{k,\chi,\varepsilon}(x,D)v_k$ according to \eqref{supp2-eq} has its spectrum in 
$\overline{B}(0,2R2^k)$, the above estimate allows application of
Lemma~\ref{Fspq-lem}, if $M$ is so large that 
\begin{equation}
  M>0,\quad M+s>0,\quad M+s>\fracnp-n.
  \label{M-ineq}
\end{equation}
This gives convergence of 
$\sum a_{k,\chi,2^{-k\theta}}(x,D)v_k$ to a function in
$F^{s+M}_{p,\infty }$ fulfilling
\begin{equation}
  \Nrm{\sum_{k=1}^\infty a_{k,\chi,2^{-k\theta}}(x,D)v_k}{F^{s+M}_{p,\infty }}
  \le c\nrm{u}{F^{s+d}_{p,q}}.
  \label{a21-eq}
\end{equation}
On the left-hand side the embedding $F^{s+M}_{p,\infty }\hookrightarrow
F^{s}_{p,q}$ applies, of course.

For the remainder $\sum_{k=1}^\infty b_k(x,D)v_k$, cf \eqref{akke-id}
ff, note that \eqref{akkeLp-eq} holds for $M=0$ with the same $\sigma$. 
If combined with a part of \eqref{a2Lplq-eq}, a crude use of the (quasi-)triangle
inequality gives 
\begin{equation}
  \int\Nrm{2^{ks}b_k(x,D)v_k(\cdot)}{\ell_q}^p
    \,dx
\le \int\Nrm{2^{ks}(a_k(x,D)-a_{k,\chi,2^{-k\theta}}(x,D))
        v_k(\cdot)}{\ell_q}^p \,dx
 \le c \nrm{u}{F^{s+d}_{p,q}}^p.
\end{equation} 
The series also fulfils a corona condition
with inner radius $2^{(1-\theta)k}$ for all large $k$, namely
\begin{equation}
  \supp\cal Fb_k(x,D)v_k\subset
  \Set{\zeta}{(r2^{-h-2})2^{k(1-\theta)}\le |\zeta|\le R2^k}.
  \label{bkcorona-eq}
\end{equation}
Indeed, $\hat b_k(x,\eta)=0$ holds if $\chi(\xi+\eta,2^{-k\theta}\eta)=1$, 
so at least for $2\max(1,|\xi+\eta|)\le  2^{-k\theta}|\eta|$;
whence by \eqref{ahat-eq},
\begin{equation}
  \supp \hat b_k\subset \Set{(\xi,\eta)}{2^{-1-k\theta}|\eta|\le 
  \max(1,|\xi+\eta|)\le |\eta|}.
\end{equation}
The Spectral Support Rule \eqref{Sigma-eq} shows that $\zeta=\xi+\eta$ only belongs to
$\supp\cal Fb_k(x,D)v_k$ if
\begin{gather}
  |\zeta|\le |\eta|\le R2^k
\\
  \max(1,|\zeta|)\ge 2^{-1-k\theta}|\eta|\ge r2^{k(1-\theta)-h-2}.  
\end{gather}
When $2^{k(1-\theta)}>2^{h+2}/r$ 
(so that the last right-hand side is $>1$) this shows \eqref{bkcorona-eq}.
Hence Proposition~\ref{Fcor-prop} applies, and the choice of $\theta$ gives
\begin{equation}
  \Nrm{\sum_{k=1}^\infty b_k(x,D)v_k}{F^{s'}_{p,q }}
  \le c\nrm{u}{F^{s+d}_{p,q}}.  
  \label{a22-eq}
\end{equation}

The other contribution $\sum
(a^k(x,D)-a^{k-h}(x,D))u_k$ in \eqref{a2'-eq} is analogous, with a splitting 
of $\tilde a_k=a^k-a^{k-h}$ into $\tilde a_{k,\chi,\varepsilon}+\tilde b_k$ as in 
\eqref{akke-id}.
In particular the inequality \eqref{akkev-eq} can be carried over to
$\tilde a_{k,\chi,\varepsilon}(x,D)u_k$, 
with just another constant because Minkowski's inequality now leads to
an estimate in terms of $\int |\Psi-\Psi(2^h\cdot)|dy$ . Consequently
\eqref{akkev''-eq} carries over, and with \eqref{M-ineq} the same
arguments as for \eqref{a21-eq}, \eqref{a22-eq} give 
\begin{equation}
  \Nrm{\sum_{k=h}^\infty 
        (a^k-a^{k-h})_{\chi,\varepsilon}(x,D)u_k}{F^{s+M}_{p,\infty }}
+  \Nrm{\sum_{k=h}^\infty \tilde b_k(x,D)u_k}{F^{s'}_{p,q }}
  \le c\nrm{u}{F^{s+d}_{p,q}}.  
  \label{a23-eq}
\end{equation}
Altogether the estimates \eqref{a21-eq}, \eqref{a22-eq}, \eqref{a23-eq}
show that
\begin{equation}
  \Nrm{a^{(2)}_{\psi}(x,D)u}{F^{s'}_{p,q}}\le c\Nrm{u}{F^{s+d}_{p,q}}.
\end{equation}
Via the decomposition \eqref{a123-eq}, 
$a_{\psi}(x,D)$ is therefore a bounded linear map $F^{s+d}_{p,q}\to
F^{s'}_{p,q}$. Since $\cal S$ is dense for $q<\infty $ (a case one
can reduce to), there is no dependence on the 
modulation function $\psi$, so the type $1,1$-operator
$a(x,D)$ is defined and continuous on $F^{s+d}_{p,q}$ as stated.

The arguments are similar for the Besov spaces: it suffices to interchange
the order of the norms in $\ell_q$ and $L_p$, and to use the estimate in 
\eqref{u*M-eq} for each single $k$. 
\end{proof}

The proof extends to cases with $0<p\le \infty $ when $s'<s\le \max(0,\frac np-n)$,
but this barely fails to reprove Theorem~\ref{FB8-thm}, so only $p\le 1$ is
included in Theorem~\ref{FB8'-thm}. Cf also Remark~\ref{lp-rem} below.

One particular interest of Theorem~\ref{FB8'-thm} is that $F^0_{p,2}(\Rn)$
identifies with the
so-called local Hardy space $h_p(\Rn)$ for $0<p\le 1$; cf \cite{T2} and
especially \cite[Ch.~1.4]{T3}. In this case Theorem~\ref{FB8'-thm} gives
boundedness as a map $a(x,D)\colon h_p(\Rn)\to F^{s'}_{p,2}(\Rn)$ for every $s'<0$,
but this can probably be improved in view of recent results:

\begin{rem} \label{lp-rem}
Extensions to  $h_p(\Rn)$ of operators in the self-adjoint subclass 
$\OP(\tilde S^0_{1,1})$
were treated by Hounie and dos Santos Kapp \cite{HoSK09}, who
used atomic estimates to carry over the
$L_2$-boundedness of H{\"o}rmander \cite{H89,H97} to $h_p$, ie to obtain
estimates with $s'=s=0$. However, they
worked without a precise definition of type $1,1$-operators. 
Torres~\cite{Tor90} obtained extensions by continuity 
using the atomic decompositions in \cite{FJ2}, but 
for $s<0$ he relied on conditions on the adjoint $a(x,D)^*$ rather
than on the symbol $a(x,\eta)$ itself.
In the $F^{s}_{p,q}$-scales, general type $1,1$-operators were
first estimated by Runst~\cite{Run85ex}, though with insufficient control of
the spectra as noted in \cite{JJ05DTL};
a remedy is provided by the Spectral Support Rule \eqref{Sigma-eq}.
\end{rem}

\begin{rem}
Together Theorems~\ref{FBspq-thm}, \ref{FB8-thm}
and \ref{FB8'-thm} give a satisfactory $L_p$-theory of operators
$a(x,D)$ in the self-adjoint subclass $\OP(\tilde S^{d}_{1,1})$, inasmuch as for the domain
$D(a(x,D))$ they cover all possible $s$, $p$. 
Only a few of the codomains seem barely
unoptimal, and these all concern cases with $0<q<1$ or $0<p\le 1$;
cf the role of the parameter $r$ in Theorem~\ref{FBspq-thm} and that
of $s'$ in Theorem~\ref{FB8'-thm}.
\end{rem}

\begin{rem}
As a corollary to Theorem~\ref{FB8'-thm}, its proof (extended to
$p\ge1$) gives that if $a(x,D)$ fulfils the
twisted diagonal condition of order $\sigma>0$, 
i.e.\ \eqref{Hsigma-eq} holds for a specific $\sigma$, then it is not difficult to see that
\begin{equation}
 B^{s}_{p,q}\bigcup F^{s}_{p,q}\subset D(a(x,D))  
\quad\text{for}\quad s>-\sigma+[N+n/2]+1-n/2,\quad 1\le p\le \infty. 
\end{equation}
Hereby $N>n/p$ must hold (as $q=\infty$ suffices now), so the condition has the form 
$s>-\sigma+k$, where $k=[n/p]+1$ in even dimensions, while in odd dimensions $k$ should be the least number in $\tfrac12+\N_0$ such that $k>n/p$. 
While this does provide a result
in the $L_p$ set-up, it is hardly optimal; cf H{\"o}rmander's condition
$s>-\sigma$ for $p=2$, recalled in \eqref{Hssigma-eq}.
\end{rem}

%
\providecommand{\bysame}{\leavevmode\hbox to3em{\hrulefill}\thinspace}

\end{document}